\documentclass[10pt, a4paper]{amsart}
\usepackage[foot]{amsaddr}
\usepackage{header}
\usepackage{arydshln}
\usepackage{placeins}
\usepackage{subfig}
\usepackage{hyperref}

\makeatletter
\renewcommand{\email}[2][]{%
  \ifx\emails\@empty\relax\else{\g@addto@macro\emails{,\space}}\fi%
  \@ifnotempty{#1}{\g@addto@macro\emails{\mbox{\textrm{(#1)}}\space}}%
  \g@addto@macro\emails{#2}%
}
\makeatother

\begin{document}

 \title[Optimal dual isogeometric mortar methods]{Biorthogonal splines for optimal  weak patch-coupling in isogeometric analysis  with applications to finite deformation elasticity
}
\thanks{
Funds provided by the Deutsche Forschungsgemeinschaft under the contract/grant numbers: WO 671/11-1 as well as PO1883/1-1, WA1521/15-1 and  WO 671/15-1 (within the Priority Programme SPP 1748, "Reliable Simulation Techniques in Solid Mechanics. Development of Non-standard Discretisation Methods, Mechanical and Mathematical Analysis") are gratefully acknowledged.
}

\author{Linus Wunderlich}	 
\author{Alexander Seitz}	 
\author{Mert Deniz Alayd{\i }n}	 
\author{Barbara Wohlmuth}	 
\author{Alexander Popp} 

\email[L.~Wunderlich]{linus.wunderlich@ma.tum.de}
\email[A.~Seitz]{seitz@lnm.mw.tum.de}
\email[M.D.~Alaydin]{mert{\_}alaydin@brown.edu}
\email[B.~Wohlmuth]{wohlmuth@ma.tum.de}
\email[A.~Popp, Corresponding author]{alexander.popp@unibw.de}

\address[B.~Wohlmuth, L.~Wunderlich]{M2 - Zentrum Mathematik, Technische Universit\"at M\"unchen, Boltzmannstra\ss{}e~3, 85748~Garching, Germany }
\address[A.~Seitz]{Institute for Computational Mechanics, Technische Universit\"at M\"unchen, Boltzmannstra\ss{}e~15, 85748 Garching, Germany }
\address[M.D.~Alaydin]{School of Engineering, Brown University, 182 Hope Street, Providence, RI 02912, USA }
\address[A.~Popp]{Institute for Mathematics and Computer-Based Simulation, Universit\"at der Bundeswehr M\"unchen, Werner-Heisenberg-Weg 39, 85577 Neubiberg, Germany }
\date{}
\maketitle

\begin{abstract}
A new construction of biorthogonal splines for isogeometric mortar methods is proposed. The biorthogonal basis has a local support and, at the same time, optimal approximation properties, which yield optimal results with mortar methods. 
We first present the univariate construction, which has an inherent crosspoint modification. The multivariate construction is then based on a tensor product for weighted integrals, whereby the important properties are inherited from the univariate case.  
Numerical results including large deformations confirm the optimality of the newly constructed biorthogonal basis. 
\end{abstract}

\section{Introduction}
Weak patch-coupling is an important feature for practical applications of isogeometric analysis (IGA). With isogeometric methods, the computational domain is usually divided into several spline patches~\cite{cottrell:06,hughes:09,beirao:14,nguyen:15} and the solution to a partial differential equation is approximated by spline functions~\cite{Hoellig03} on each patch. Typically, multivariate splines are defined based on a tensor-product structure and a flexible coupling between the patches is important to gain some flexibility of the local meshes.
Different approaches are considered, e.g., 
Nitsche's method~\cite{nguyen:14,bletzinger:14}, penalty based methods~\cite{hofer:16} and mortar methods~\cite{dornisch:14,hesch:12}, and there is a recent interest in higher-order couplings, see, e.g.~\cite{coox:16,horger:18}. A recent practical review, which includes the related issue of trimming, is given in~\cite{marussig:17}.
Besides patch-coupling, theses methods are also used for the discretization of contact problems, see, 
e.g.,~\cite{delorenzis:14,antolin:17} and the references therein.

Mortar methods were originally applied in spectral and finite element methods~\cite{ben_belgacem:99,bernardi:94,wohlmuth:01}, and for the isogeometric case a mathematical stability and a priori analysis can be found  in~\cite{brivadis:15}.
The use of dual mortar methods~\cite{wohlmuth:00} yields computational advantages also for contact discretizations~\cite{popp:09,popp:10,popp:13,popp:14}. However, already for finite element methods the use of dual mortar methods for higher order methods poses additional difficulties, which could be solved by a local change of the primal basis~\cite{lamichhane:07,wohlmuth:12a,popp:12}.
The straightforward use of dual isogeometric mortar methods was considered in~\cite{seitz:16}, where a well-behavior for contact problems was observed, while for patch-coupling the convergence rate was severely reduced.
Here, we present a new construction of local dual basis functions with optimal approximation properties, based on the construction given in~\cite{oswald:02} for the finite element context. 
For the first time, this scheme allows to combine the two crucial features of local support of the dual basis and optimal approximation properties.
Alternative approaches include the use of basis functions that are not dual, but have a more convenient sparsity structure than standard basis functions~\cite{dornisch:17}. Very recently, it has been proposed in~\cite{zou:17} to refine one layer of elements along the interfaces to obtain a matching mesh. In the case of non-matching parametrizations, the mesh does not necessarily match even though the knots do match, so an unknown number of extra refinements is necessary. Also, the extension to a general three-dimensional setting remains unclear.

This article is structured as follows. In the next section, 
 we state the problem setting and briefly present standard isogeometric methods. 
 In Section~\ref{sec:dual_construction} we present the construction of the local dual basis functions with optimal approximation properties for one-dimensional and two-dimensional interfaces. 
The newly constructed dual basis functions are applied to isogeometric patch-coupling in Section~\ref{sec:numerical_results}
and the results of this work are summarized in Section~\ref{sec:conclusion}.

\section{Problem setting and recap of isogeometric mortar methods}
\label{ch:problem_setting}
Let $\Omega \subset \R^d$, $d=2,3$ be a bounded domain with a piecewise smooth boundary,  which is decomposed into two open sets $\GammaD, \GammaN$, such that $\GammaD\cap\GammaN=\emptyset$ and  $\barGammaD\cup\barGammaN = \partial \Omega$. 
The reference domain $\Omega$ with its points $\X$ is mapped at any instance of time $t$ to the deformed configuration $\Omega^{(t)}$ with its points $\x$ via the orientation preserving, invertible mapping $\motion_t:\Omega \to \Omega^{(t)}, \X \mapsto \x(\X)$, which defines the displacement field $\mathbf{u}=\x(\X)-\X$.
Starting from the deformation gradient $\defgrd=\idmat+\frac{\partial\mathbf{u}}{\partial\X}$, the right Cauchy--Green tensor $\rcg=\defgrd\vp\defgrd$ defines a non-linear measure of stretches. 
For simplicity, a hyperelastic material behavior is assumed, although the later presented mortar method directly applies to other constitutive relations as well.
For hyperelastic materials, the existence of a strain energy function $\sef$ is postulated, and the second Piola--Kirchhoff stress is then defined via $\PK=2\frac{\partial\sef}{\partial\rcg}$.
For homogeneous Dirichlet values, we
solve the quasi-static equilibrium equations of nonlinear elasticity on the domain $\Omega$:
\begin{align*}
\Divergence(\defgrd\PK) + \bodyforce &=\mathbf 0 ~  \text{ in } \Omega,\\
\mathbf u&=\mathbf 0  ~ \text{ on } \GammaD,\\
(\defgrd\PK)\refNormal&=\Neumanntraction ~ \text{ on } \GammaN,
\end{align*}
where $\refNormal$ denotes the outward unit-normal on $\GammaN$, $\bodyforce$ a body force vector per unit undeformed volume and $\Neumanntraction$ a given first Piola--Kirchhoff traction vector.

In the following, we briefly present the isogeometric mortar methods. 
For a more detailed presentation and the use of trace space Lagrange multipliers, see~\cite{brivadis:15}, for the extension to contact problems, see~\cite{seitz:16}.

\subsection{Standard spline spaces}
Let a spline degree $p$ and an open knot vector (i.e., first and last $p+1$ entries are repeated) $\Xi = (\xi_1,\ldots,\xi_{n+p+1})$ be given. The entries of $\Xi$ without their repetitions form the break point vector $Z = (\zeta_1,\ldots,\zeta_E)$, and $m_i$ denotes the multiplicity of $\zeta_i$ in $\Xi$. 
The Cox-de Boor recursion formula then defines the spline basis functions $\widehat{B}_i^p$, $i=1,
\ldots,n$ and the corresponding spline space $\widehat{S}^p(\Xi) = \spann_i \widehat{B}_i^p $ in the univariate setting. In the multivariate setting, consider $\boldsymbol \Xi = \Xi_1\times\cdots\times\Xi_d$ and the B-spline basis $\widehat{B}_{\mathbf i}^p(\boldsymbol \zeta) = \widehat{B}_{i_1}^p(\zeta_1)\cdots \widehat{B}_{i_d}^p(\zeta_d)$ with the spline space
$\widehat{S}^p(\boldsymbol \Xi) = \bigotimes_{\delta=1}^d \widehat{S}^p(\Xi_\delta) = \spann_{\mathbf  i}  \widehat{B}_{\mathbf i}^p $.
For simplicity of notation, we consider the same polynomial degree in all directions.

Introducing positive weights $w_{\mathbf i} > 0$ and the corresponding weight function $\NURBSWeight(\boldsymbol \zeta) = \sum_{\mathbf i\in\mathbf I} w_{\mathbf i} B_{\mathbf i}^p(\boldsymbol \zeta)$, we define the NURBS basis and space as
\[
\widehat N_{\mathbf i}^p(\boldsymbol \zeta) = \widehat B_{\mathbf i}^p(\boldsymbol \zeta) / \NURBSWeight(\boldsymbol \zeta), \quad 
\widehat N^p(\boldsymbol \Xi) = \{\widehat v_h =\widehat  w_h/\NURBSWeight, \quad\widehat w_h\in\widehat  S^p(\boldsymbol \Xi)\}.
\]

\subsection{Description of the computational domain}\label{subsec:computational_domain}
We consider a decomposition of the domain $\Omega$ into $K$ non-overlapping domains $\Omega_k$:
\[
\overline{\Omega} = \bigcup_{k=1}^K \overline{\Omega}_k, \text{ and } \Omega_i \cap \Omega_j = \emptyset  \text{ for } i \neq j.
\]
For $1\leq k_1, k_2 \leq K $, $k_1\neq k_2$, the interface is defined as the interior of the intersection of the boundaries, i.e., $\overline{\refInterface}_{k_1k_2} = \partial {\Omega}_{k_1} \cap \partial {\Omega}_{k_2}$, where ${\refInterface}_{k_1k_2}$ is open. The  non-empty interfaces  are enumerated as $\refInterface_l$, $l = 1,\,\ldots,\, L$. 
For each interface, one of the adjacent subdomains is chosen as the master side $m(l)$, the other one as the slave side $s(l)$, i.e., $\overline{\refInterface}_l=\partial \Omega_{m(l)}\cap \partial \Omega_{s(l)}$. The slave side is used to define the Lagrange multiplier space that enforces the coupling between the master and the slave side.   

Each subdomain $\Omega_k$ is given as the image of the parametric space $\widehat \Omega = (0,1)^d$ by one single NURBS parametrization $\mathbf{F}_k\colon \widehat{\Omega} \rightarrow \Omega_k$,  $\mathbf{F}_k\in (N^p(\boldsymbol \Xi))^d$, which satisfies the regularity Assumption \cite[Assumption 1]{brivadis:15}: The parametrization $\mathbf{F}_k$ is a bi-Lipschitz homeomorphism, $\left.\mathbf{F}_k\right|_{\overline{ \bf Q}} \in C^{\infty}(\overline{\bf Q})$ and 
$\left.\mathbf{F}_k^{-1}\right|_{\overline{ \bf O}} \in C^{\infty}(\overline{\bf O})$
for any elements ${\overline{ \bf Q}}$ and ${\overline{ \bf O}}$ of the parametric  and the physical mesh, respectively.

Furthermore, we assume that the decomposition represents the Dirichlet boundary in the sense, that the pull-back of $\partial \Omega_k\cap\GammaD$ is either empty or the union of whole faces of the unit $d$-cube. 
We furthermore assume to be in a slave conforming situation, i.e.,  for each interface, the pull-back with respect to the slave domain is a whole face of the unit $d$-cube in the parametric space.
The $h$-refinement procedure yields a family of meshes, with each mesh being a uniform refinement of the initial one.

\subsection{The variational forms}
For each subdomain $\Omega_k$, we consider the local space $H^1_{\rm D}(\Omega_k) = \{v_h\in H^1(\Omega_k)\colon  \left. v_h \right|_{\partial \Omega_k \cap \GammaD} = 0\}$ and define the global broken Sobolev spaces $V= \Pi_{k=1}^K H^1_{\rm D}(\Omega_k)$ and $M= \Pi_{l=1}^L H^{-1/2}(\refInterface_l)$, endowed with the broken norms $ \| v \|_{V}^2 = \sum_{k=1}^K \| v \|_{H^1(\Omega_k)}^2$ and  $ \| v \|_{M}^2 = \sum_{l=1}^L \| v \|_{H^{-1/2}(\refInterface_l)}^2$.

Defining $\mathbf V = (V)^d$ and $\mathbf M = (M)^d$, we consider the broken non-linear form $a\colon\mathbf V\times\mathbf V\rightarrow \R$ and the   linear form $f_{\rm ext}\colon\mathbf V \rightarrow \R$:
\begin{gather*}
a(\mathbf u,\mathbf v) = \sum_{k=1}^K \int_{\Omega_k} 
(\defgrd\PK):\Gradient \mathbf v \dx
, ~
f_{\rm ext}(\mathbf v) =\sum_{k=1}^K 
 \int_{\partial\Omega_k\cap \GammaN}\!\!\! \!\!\!\!\!\!\!  \Neumanntraction \vp \mathbf v  \dsurface
 +\int_{\Omega_k} \bodyforce\vp\mathbf v \dx.\end{gather*} 

\subsection{Isogeometric mortar discretization}
In the following, we define our discrete approximation spaces used in the mortar context, the mortar saddle point problem and the convergence order.
We introduce $V_{k,h}$ as the approximation space on $\Omega_k$ by \[V_{k,h}=\{v_k=\widehat{v}_k \circ \mathbf{F}_k^{-1} \in H_{\rm D}^1(\Omega_k), \widehat{v}_k \in\widehat N^{p_k}(\mathbf{\Xi}_k) \},\] which is defined on the knot vector $\mathbf{\Xi}_k$ of degree $p_k$, with $\mathbf V_{k,h}=(V_{k,h})^d$. 
On $\Omega$, we define the product space $\mathbf V_h = \Pi_{k=1}^K \mathbf V_{k,h} \subset \mathbf V$, which forms an $(H^1(\Omega))^d$ non-conforming space as it is discontinuous over the interfaces.

The mortar method is based on a weak enforcement of continuity across the interfaces $\refInterface_l$ in broken Sobolev spaces. Let a space of discrete Lagrange multipliers $M_{l,h}\subset L^2(\refInterface_l)$  on each interface $\refInterface_l$ be given. On the skeleton $\Gamma$, we define the discrete product Lagrange multiplier space $\mathbf M_h$ as $\mathbf M_h = \Pi_{l=1}^L M_{l,h}^d$.

One possibility for a mortar method is to specify the discrete weak formulation as a saddle point problem:
Find $ (\mathbf u_h, \LM_h) \in \mathbf V_h \times \mathbf M_h,$  such that 
\begin{subequations}\label{eq:IGM:discrete_spp}
\begin{align}
		a(\mathbf u_h, \mathbf v_h)+ b(\mathbf v_h, \LM_h) &= f_{\rm ext}(\mathbf v_h), \quad  \mathbf  v_h \in \mathbf V_h,\\
		b(\mathbf u_h, \LMtest_h) &= 0, \quad  \LMtest_h \in\mathbf  M_h,
\end{align}
\end{subequations}
where $b(\mathbf v,\boldsymbol \LMtest) = \sum_{l=1}^L \int_{\refInterface_l} \rho\, \boldsymbol \LMtest\vp [\mathbf v]_l \dsurface $ includes a weight $\rho\colon\refInterface_l\rightarrow \R$ and  $[\cdot]_l$ denotes the jump from the master to the slave side over $\refInterface_l$. The standard choice $\rho = 1$ will be altered in the three-dimensional case to simplify the construction of the Lagrange multiplier.

Due to the jump term, the coupling term decomposes in two integrals:
\[
\int_{\refInterface_l} \rho\, \boldsymbol \LMtest\vp [\mathbf v]_l \dsurface
= 
\int_{\refInterface_l} \rho\, \boldsymbol \LMtest\vp \mathbf v_s  \dsurface
-
\int_{\refInterface_l} \rho\, \boldsymbol \LMtest\vp \mathbf v_m \dsurface,
\]
where the second one includes the product of functions defined on the slave domain and the master domain on the interface.
As we assume the subdomains to match at the interface, the identity mapping on the geometric space suits as a projection between the spaces. In contrast, the isogeometric parametrizations of both subdomains are independent and may not match. 
To map a point $\boldsymbol \zeta_s$ in the parametric domain of the slave side to the equivalent point $\boldsymbol \zeta_m$ in the parametric domain of the master side, the inverse of the master geometry mapping is applied:
\[
\boldsymbol \zeta_m = \mathbf F_m^{-1} ( \mathbf F_s (\boldsymbol \zeta_s) ).
\]
We note that the accurate numerical integration of the coupling terms is important to obtain an optimal  method, see~\cite{brivadis:15b,wohlmuth:02,maday:97,farah:15}.

\subsection{Standard and biorthogonal  Lagrange multiplier spaces}

It is well-known from the theory of mixed and mortar methods, that two requirements guarantee the method to be well-posed and  of optimal order, see~\cite{ben_belgacem:99,wohlmuth:01,brezzi:13}. One is a uniform inf-sup stability of the discrete spaces and the second one an approximation requirement of the Lagrange multiplier.  
Given a sufficient approximation order and the inf-sup stability, then~\cite[Theorem 6]{brivadis:15} yields optimal order convergence rates.

Several stable trace spaces exist, but the structure of the resulting equation system~\eqref{eq:IGM:discrete_spp}, which is of a saddle point problem, causes a high computational effort. In comparison to a purely primal system, the saddle point system has more degrees of freedom, but also the solution of the indefinite equation system is more complicated, see the discussion in~\cite{benzi:05}. Without the use of biorthogonal basis functions, the reduction to a symmetric positive definite system in the primal variable involves the inversion of a non-diagonal mass matrix and severely disturbs the sparsity of the system.

For a clear insight, let us consider the block-structure of the saddle point problem arising within each Newton step for~\eqref{eq:IGM:discrete_spp} with a two-patch coupling, written in terms of the primal $u$ and dual $\lambda$ degrees of freedom:
\[
\begin{pmatrix}
K&M^\top\\
M&0
\end{pmatrix}
\begin{pmatrix}
\Delta u\\\lambda
\end{pmatrix}
=
-
\begin{pmatrix}
f\\0
\end{pmatrix},
\]
where $f=f_{\rm int} - f_{\rm ext}$.

 The saddle point problem is decomposed based on the degrees of freedom $u^{\rm S}_{\rm{I}}$ and $u^{\rm M}_{\rm{I}}$ belonging to the slave and the master body, respectively, except the ones on the interface, denoted $u^{\rm S}_{\Gamma}$ and $u^{\rm M}_{\Gamma}$, respectively: 
\begin{equation}\label{eq:full_spp}
\begin{pmatrix}
K^{\rm S}_{\rm{I}\rm{I}} & K^{\rm S}_{\rm{I}\Gamma}  & & & \\
K^{\rm S}_{\Gamma\rm{I}} & K^{\rm S}_{\Gamma\Gamma}  & & & M_{\rm{SS}}^\top \\
&& K^{\rm M}_{\rm{I}\rm{I}} & K^{\rm M}_{\rm{I}\Gamma}  & \\
&& K^{\rm M}_{\Gamma\rm{I}} & K^{\rm M}_{\Gamma\Gamma}  & -M_{\rm{SM}}^\top \\ 
& M_{\rm{SS}} &&-M_{\rm{SM}}&
\end{pmatrix}
\begin{pmatrix}
\Delta u^{\rm S}_{\rm{I}}\\
\Delta u^{\rm S}_{\Gamma}\\
\Delta u^{\rm M}_{\rm{I}}\\
\Delta u^{\rm M}_{\Gamma}\\
 \lambda
\end{pmatrix}
=
-
\begin{pmatrix}
f^{\rm S}_{\rm{I}}\\
f^{\rm S}_{\Gamma}\\
f^{\rm M}_{\rm{I}}\\
f^{\rm M}_{\Gamma}\\
 0
\end{pmatrix}.
\end{equation}
To reduce the saddle point problem, we note that with the mortar projection
$P=M_{\rm{SS}}^{-1}M_{\rm{SM}}$
 the last equation
yields
$\Delta u^{\rm S}_{\Gamma} = P \Delta u^{\rm M}_{\Gamma}$  
and the second equation yields
$ \lambda = - M_{\rm{SS}}^{-\top}\left(f^{\rm S}_\Gamma +  K^{\rm S}_{\Gamma\rm{I}}\Delta u^{\rm S}_{\rm{I}} + K^{\rm S}_{\Gamma\Gamma}P \Delta u^{\rm M}_{\Gamma} \right)$.
Then the saddle point problem reduces to the purely primal problem:
\begin{equation}\label{eq:spp_reduced}
\begin{pmatrix}
K^{\rm S}_{\rm{I}\rm{I}} &  &  K^{\rm S}_{\rm{I}\Gamma} P \\
& K^{\rm M}_{\rm{I}\rm{I}} & K^{\rm M}_{\rm{I}\Gamma}  \\
P^\top K^{\rm{S}}_{\Gamma\rm{I}}& K^{\rm M}_{\Gamma\rm{I}} & K^{\rm M}_{\Gamma\Gamma}  + P^\top K^{\rm S}_{\Gamma\Gamma}P
\end{pmatrix}
\begin{pmatrix}
\Delta u^{\rm S}_{\rm{I}}\\
\Delta u^{\rm M}_{\rm{I}}\\
\Delta u^{\rm M}_{\Gamma}
\end{pmatrix}
=
-
\begin{pmatrix}
f^{\rm S}_{\rm{I}} \\
f^{\rm M}_{\rm{I}}\\
f^{\rm M}_{\Gamma}+P^\top f^{\rm S}_\Gamma 
\end{pmatrix}.
\end{equation}
The sparsity of this matrix depends highly on the sparsity of the mortar projection $P$. Since $M_{\rm{SM}}$ is sparse, it depends on $M_{\rm{SS}}^{-1}$, which in general is dense, unless it is of diagonal form. A diagonal form is in general only achieved for biorthogonal basis functions.
\begin{figure}
\includegraphics[width=.32\textwidth]{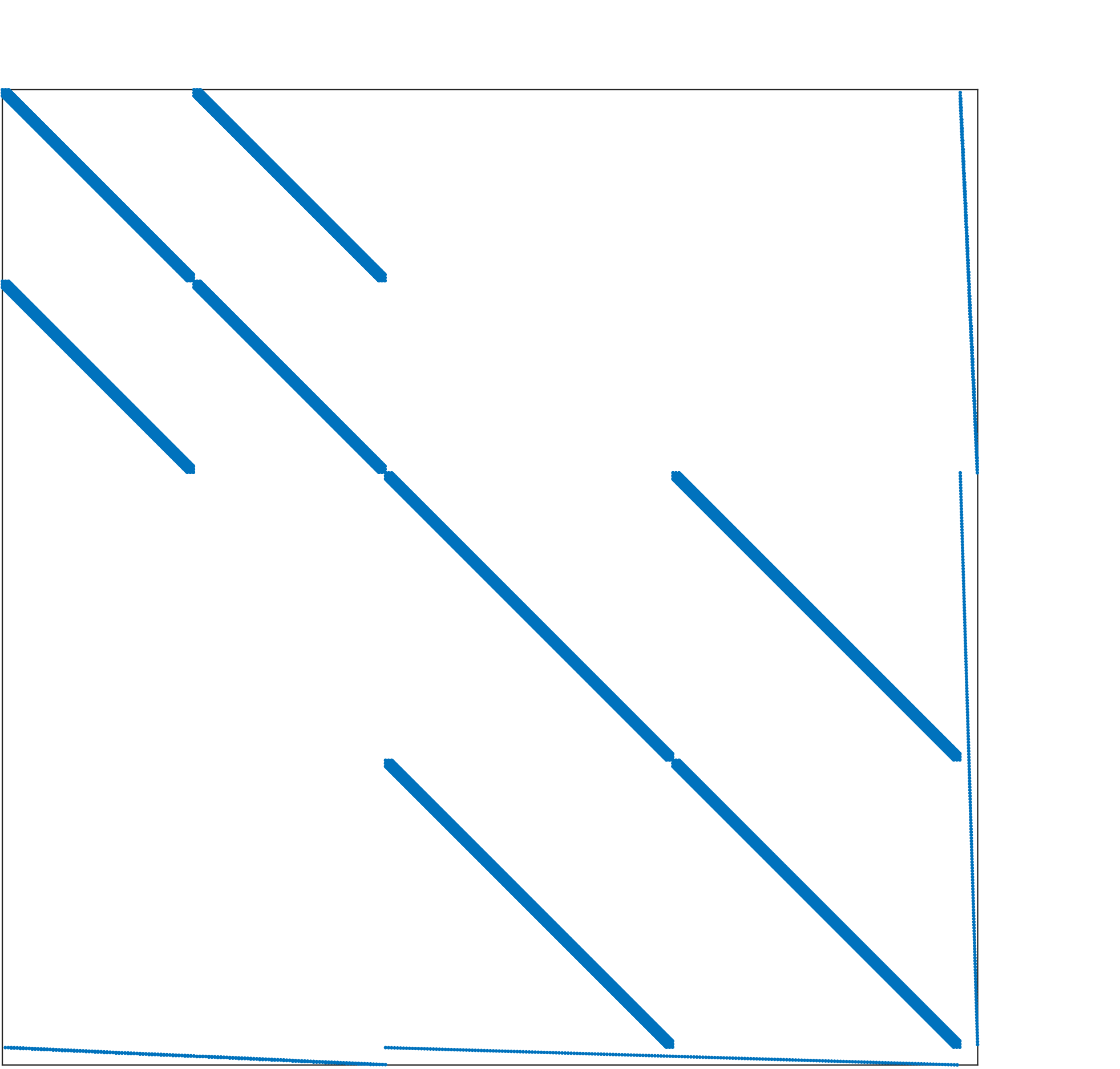}\hfill
\includegraphics[width=.32\textwidth]{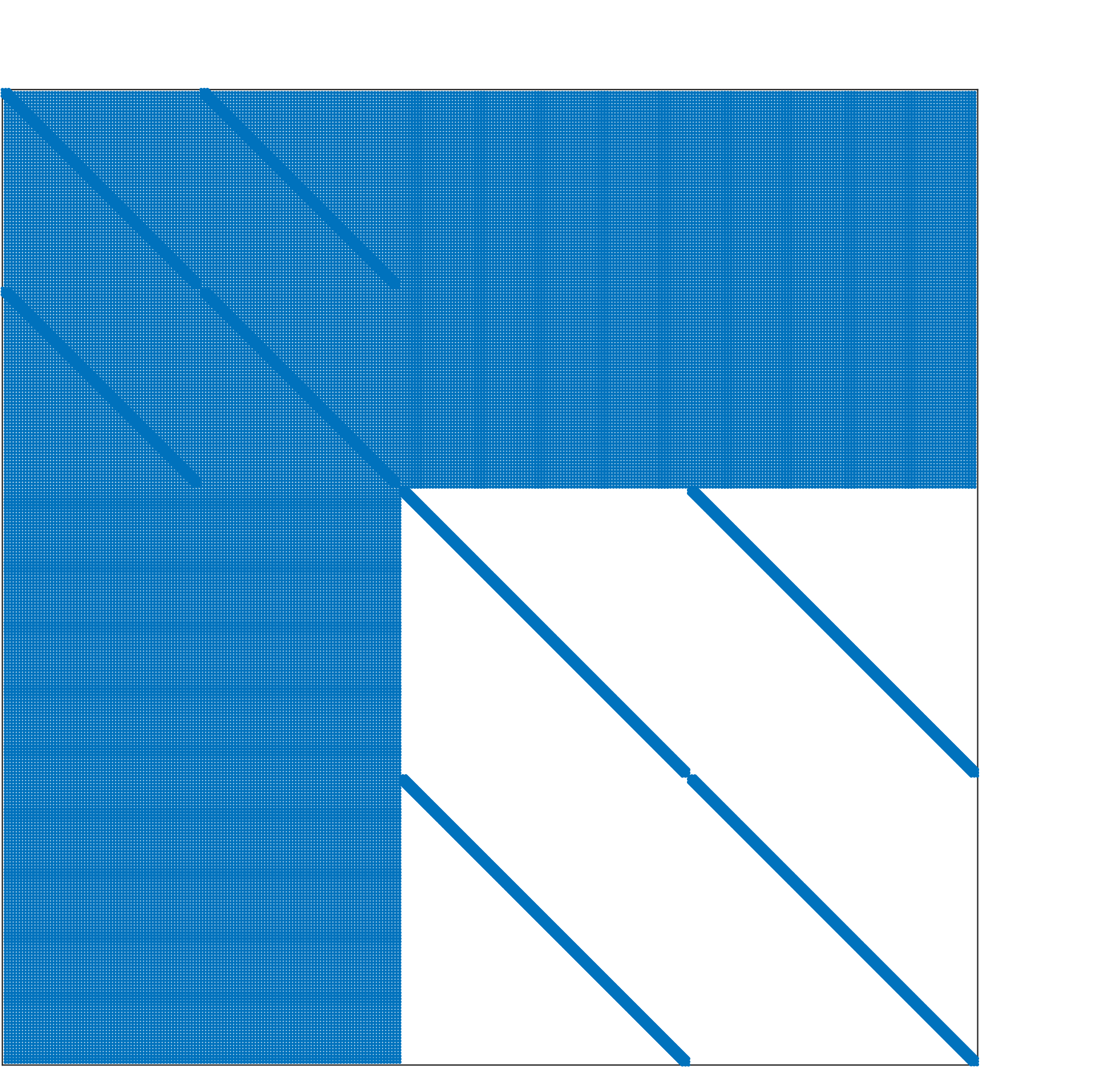}\hfill
\includegraphics[width=.32\textwidth]{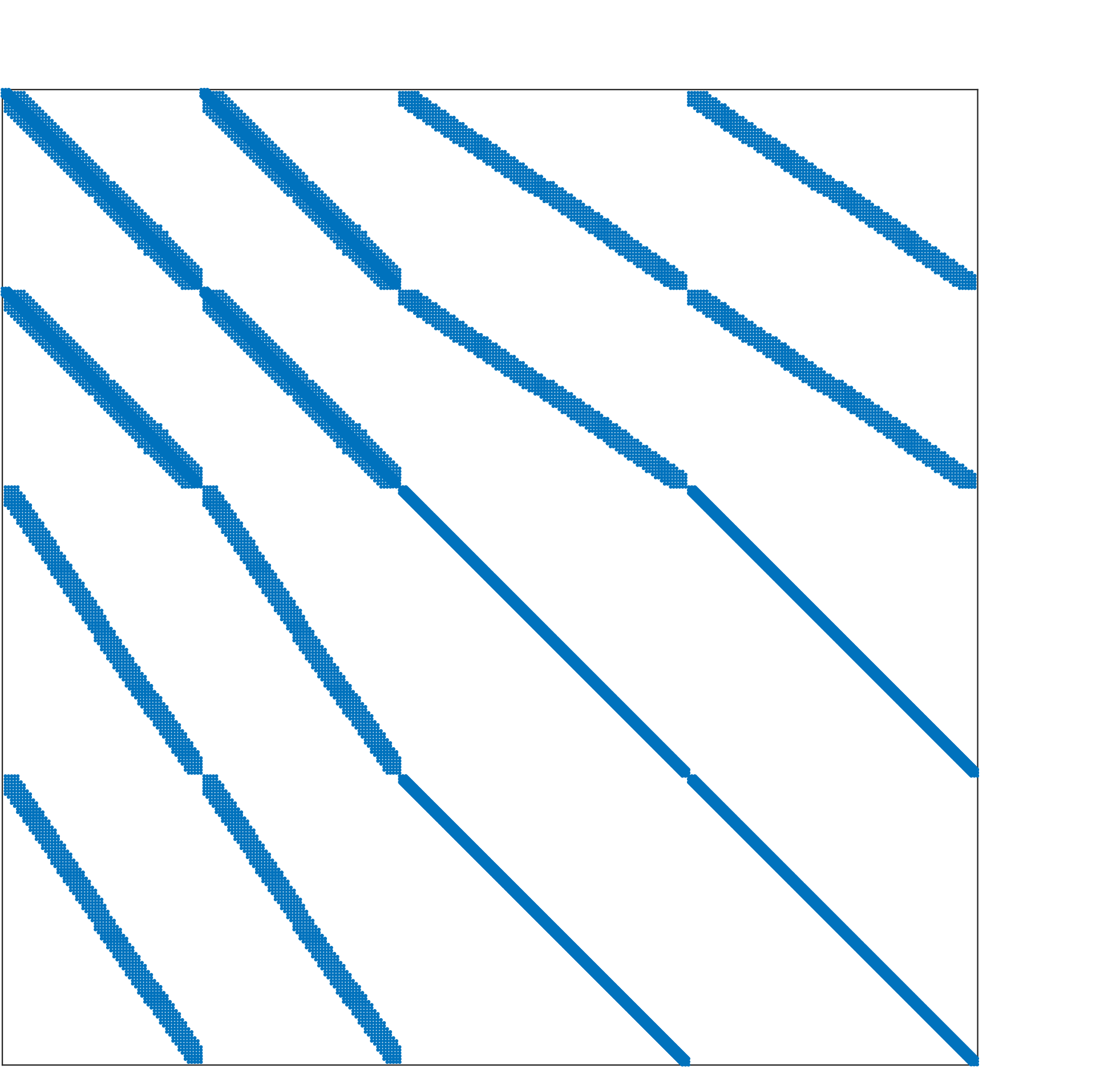}
\caption{Sparsity patterns for the equation system solved within each Newton step of the discrete non-linear system~\eqref{eq:IGM:discrete_spp}.  Two-dimensional, two-patch setting with $10\,560$ primal and $192$ dual degrees of freedom. Left: Saddle point structure~\eqref{eq:full_spp}. Middle: Reduced system~\eqref{eq:spp_reduced} for standard Lagrange multiplier. Right: Reduced system~\eqref{eq:spp_reduced} for dual Lagrange multiplier.}
\label{fig:sparsity_patterns}
\end{figure}
Examples of sparsity patterns for biorthogonal and standard mortar methods are shown in Figure~\ref{fig:sparsity_patterns}.

A first approach for dual basis functions in isogeometric mortar methods is presented in~\cite{seitz:16}, where a  local inversion of mass matrices is used to construct a dual isogeometric basis. This basis proved to be suitable for contact problems, but for mesh-tying problems, suboptimal convergence rates were observed, because the approximation order of the dual basis is insufficient. 
In the following, we will propose an alternative  construction of biorthogonal spline functions, which guarantees optimal approximation properties and is  based on the finite element construction in~\cite{oswald:02}.

\section{Construction of an optimal, locally supported biorthogonal basis}
\label{sec:dual_construction}
The presented construction is based on the same local polynomial spaces as the primal space. Solving local equation systems ensures optimality of the dual space as well as the locality of the resulting dual basis. 

With mortar methods, special care needs to be taken with crosspoints ($d=2$) or wirebaskets ($d=3$) to prevent an over-constrained global equation system:
\[
 \crosspoints= 
\left( \bigcup_{l\neq j}  \partial \refInterface_l\cap \partial\refInterface_j\right) \cup \left( \bigcup_l \partial\refInterface_l \cap \GammaD\right). 
\]
In the vicinity of crosspoints, we consider the restriction of the discrete primal space to zero boundary values: $H^1_{\rm CP}(\refInterface_l) = \{ v\in H^1(\refInterface_l), \left. v\right|_{\crosspoints} = 0\}$  and $W_{l,h} = \{ \left. v_h \right|_{\refInterface_l}, v_h \in V_{s(l), h}\}\cap H^1_{\rm CP}(\refInterface_l)$.

We consider the coupling $b(\mathbf v,\boldsymbol \LMtest) = \sum_{l=1}^L \int_{\refInterface_l} \rho\, \boldsymbol \LMtest\vp [\mathbf v]_l \dsurface$, for each interface $\Gamma_l$ individually and hence drop the index $l$. Furthermore, only the slave domain is considered, so we also drop the index $s(l)$.  
For each interface, we  transform the integral to the parametric domain: 
For a  NURBS basis function $N_{\mathbf i} = (\widehat{{B}}_\mathbf i  \circ \mathbf F^{-1} ) / (\NURBSWeight \circ \mathbf F^{-1})$  and a Lagrange multiplier basis $\psi_\mathbf j = \widehat \psi_\mathbf j  \circ \mathbf F^{-1}$, we get
\begin{align}
\int_{\refInterface} \rho\,\left(N_\mathbf i\mathbf e_k\right) \vp\left( \psi_\mathbf j \mathbf e_{k'}\right)  \dsurface
 &=  \delta_{kk'} \int_{\refInterface} \rho\, N_\mathbf i \psi_\mathbf j \dsurface \notag\\
&= \delta_{kk'}\int_{\gammahat}\widehat \rho\,  \widehat B_\mathbf i^{p} \widehat\psi_\mathbf j  \dsurfacehat,\label{eq:integral_trafo}
\end{align}
with $\widehat \rho = \rho \circ \mathbf{F}~ \det{\nabla_\gammahat \mathbf{F}}/\NURBSWeight$.
We use the Kronecker delta $\delta_{ij}$, which equals one if $i=j$ and zero otherwise.  
Biorthogonality is characterized by the relation 
\[
 \int_{\gammahat}\widehat \rho\,  \widehat B_\mathbf i^{p} \widehat\psi_\mathbf j  \dsurfacehat = c_{\mathbf i} \, \delta_{\mathbf i \mathbf j}.
\]

We start with the one-dimensional construction and later  consider the tensor-product extension based on the use of weighted but equivalent $L^2$-spaces. 
\subsection{Unilateral construction}
The biorthogonal basis with polynomial reproduction is constructed by carefully studying the required properties. It is defined within a broken space of local polynomials of the same degree, where a family of  biorthogonal basis functions with local support exists. Then several local equation systems are solved to find a basis with local support and the desired polynomial reproduction. 

 Without loss of generality $\refInterface = \mathbf{F}((0,1)\times\{0\})$, i.e., $\gammahat = (0,1)\times\{0\}$.
We consider the weighted $L^2$-product $(u,v)_{\Normsub} = \int_0^1 u\,v\,\widehat{\rho} \dsurfacehat$ as given by~\eqref{eq:integral_trafo}.
The trace space of B-splines on the parametric space is then given by: 
\[W_h = \left\{\widehat v_h = \left. v_h\circ \mathbf{F} ~ \NURBSWeight\right|_{
\gammahat} , v_h\in V_{h} \cap H_{\rm CP}^1(\refInterface)   \right\}  \subset\widehat S^p(\boldsymbol \Xi_1),\]
with the basis $\widehat{B}^p_i$, $i\in\Ibsp=\{\imin,\ldots,\imax\}$, where $\imin\in\{1,2\}$ and $\imax\in\{n-1,n\}$, depending on  $\crosspoints$. 

The biorthogonal basis is constructed within the broken polynomial space of the same degree as the spline space:
\[
W_h^{-1} = \{ v\in L^2(0,1), \left.v\right|_{e_i}\in \Poly_p\},
\]
which is of dimension $N =  \dim W_h^{-1} = (E-1)(p+1)$, where $E-1$ is the number of elements on $\gammahat$ and $e_i = (\zeta_i, \zeta_{i+1})$. 
Since $W_h \subset W_h^{-1}$, we can extend the B-spline basis to a basis of the broken space. A convenient basis with the desired support  is constructed in the following.

The support of the extended basis is desired not to be larger than the support of a single B-spline function.
This is ensured by decomposing the broken polynomial space $W_h^{-1}$  into $n$~subspaces $W_{h,i}^{-1} $ by breaking apart the B-spline basis:
\[
W_{h,i}^{-1} = \spann \left\{\left.\widehat{B}^p_i\right|_{e_j},~~j=1,\ldots E-1\right\}, \quad i=1,\ldots,n.
\]
Since each basis function is supported on at most $p+1$ elements, it holds $n_i = \dim W_{h,i}^{-1}\leq p+1$, and, since B-splines form a local polynomial basis, it indeed holds that $W_h^{-1} = \bigotimes_{i=1}^n W_{h,i}^{-1}$.
Within each local space $W_{h,i}^{-1}$  we extend  $\widehat{B}^p_i$ to a basis, i.e., we define $\phi_{i,j}\in W_{h,i}^{-1}$, $j=1,\ldots,n_i-1$, such that 
\[
W_{h,i}^{-1} = \spann \left( \widehat{B}^p_i, (\phi_{i,j})_{j=1,\ldots,n_i-1} \right).
\]
Then, the local basis functions are combined to a basis of $W_h^{-1}$:
\[\big(\varphi_i\big)_{i=1,\ldots,N} = 
\big(\widehat{B}^p_1,\ldots,\widehat{B}^p_n, (\phi_{1,j})_{j=1,\ldots,n_1-1}, \ldots,( \phi_{n,j})_{j=1,\ldots,n_n-1} \big).
\]
Any choice of the local basis functions yields the desired support, but to simplify the algebraic construction of a biorthogonal basis, we consider the construction presented in the following Remark~\ref{rem:construction_local_basis}.

\begin{figure}

\hbox{
\setlength{\unitlength}{0.0500bp}%
\begin{picture}(7200,2520)(0,0)%
  \put(6839,200){\makebox(0,0){\strut{}$\xi_{i+3}$}}%
  \put(4699,200){\makebox(0,0){\strut{}$\xi_{i+2}$}}%
  \put(2560,200){\makebox(0,0){\strut{}$\xi_{i+1}$}}%
  \put(420,200){\makebox(0,0){\strut{}$\xi_{i}$}}%
  \put(300,2280){\makebox(0,0)[r]{\strut{}$1$}}%
  \put(300,400){\makebox(0,0)[r]{\strut{}$0$}}%
   \put(0,0){\includegraphics[scale=1]{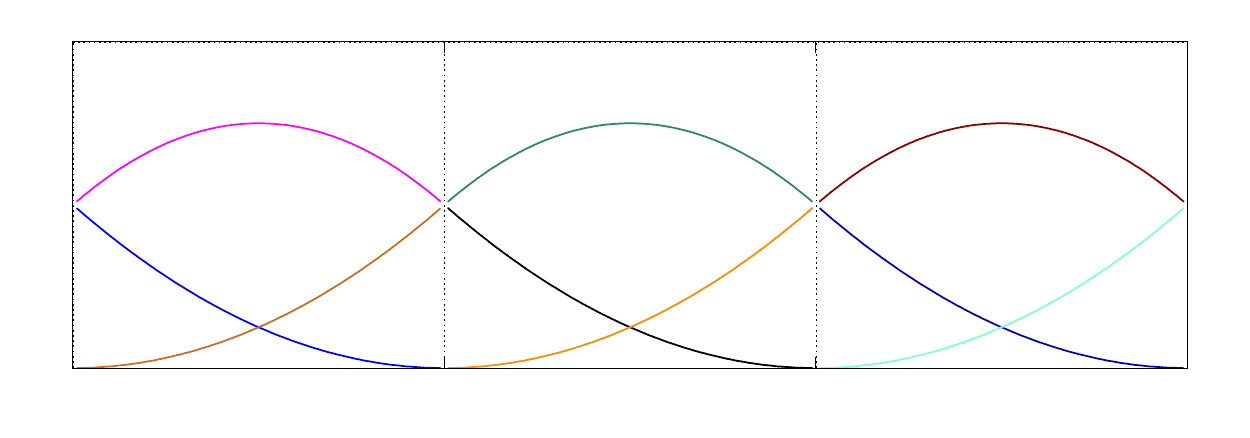}}
\end{picture}%
}
\hbox{
\setlength{\unitlength}{0.0500bp}%
\begin{picture}(7200,2520)(0,0)%
  \put(6839,200){\makebox(0,0){\strut{}$\xi_{i+3}$}}%
  \put(4699,200){\makebox(0,0){\strut{}$\xi_{i+2}$}}%
  \put(2560,200){\makebox(0,0){\strut{}$\xi_{i+1}$}}%
  \put(420,200){\makebox(0,0){\strut{}$\xi_{i}$}}%
  \put(300,2280){\makebox(0,0)[r]{\strut{}$1$}}%
  \put(300,1340){\makebox(0,0)[r]{\strut{}$0$}}%
  \put(300,400){\makebox(0,0)[r]{\strut{}$-1$}}%
   \put(0,0){\includegraphics[scale=1]{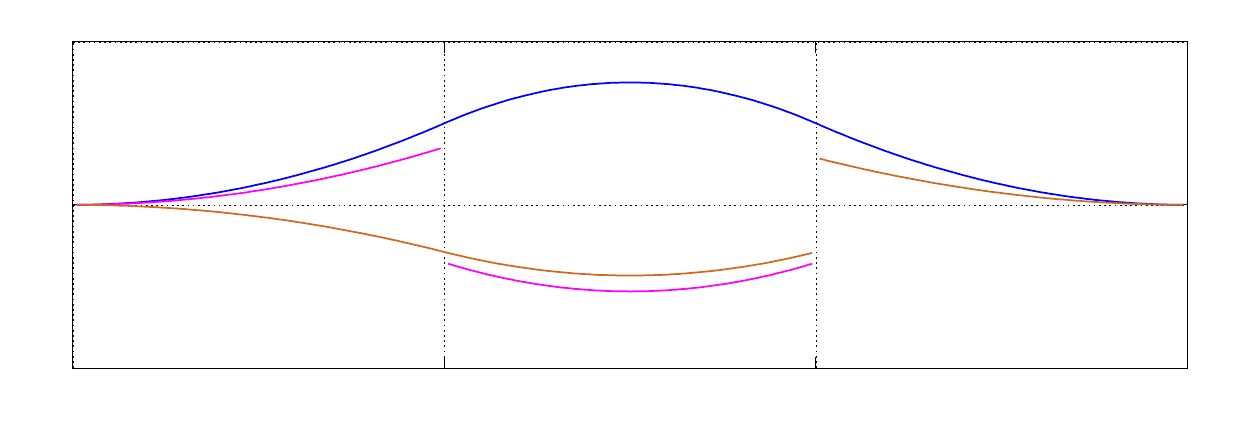}}
  \put(2050,1850){\makebox(0,0){\strut{}\large\color{blue}$\widehat{B}^2_i$}}%
  \put(3650,670){\makebox(0,0){\strut{}\large\color{magenta}$\phi_{i,1}$}}%
  \put(3650,1050){\makebox(0,0){\strut{}\large\color[rgb]{0.82,.41,.11}$\phi_{i,2}$}}%
\end{picture}%
}
  \caption{Illustration of the broken basis. Top: broken basis functions $\widehat B^2_{i,j}$. Bottom: Local basis $(\widehat{B}_i^2, \phi_{i,1}, \phi_{i,2})$, based on the orthogonal basis $A_j$ (normalized).}
  \label{fig:broken_functions}
\end{figure}

\begin{remark}[Construction of the local basis functions $\phi_{i,j}$]\label{rem:construction_local_basis}
We describe the construction for the first $\lceil n/2\rceil$ basis functions and note that the last basis functions can be constructed analogously, by transforming the index as $\widetilde\imath = n+1-i$.

Consider  $\widehat{k}$, such that $\supp \widehat{B}_i^p = [\zeta_{\widehat{k}}, \zeta_{\widehat{k}+n_i}]$. Then restricting $\widehat{B}_i^p$ to each each element of the support yields
\[\widehat{B}^p_{i,j} = \left. \widehat{B}_i^p\right|_{e_{\widehat{k}+j-1}}, \quad j=1,\ldots,n_i,\]
which decomposes $\widehat{B}_i^p$: 
\begin{equation}\label{eq:decomposition_B_spline}
\widehat{B}_i^p = \sum_{j=1}^{n_i} \widehat{B}_{i,j}^p,
\end{equation}
as illustrated in the top picture of  Figure~\ref{fig:broken_functions}.

Based on these restrictions, we extend the B-spline $\widehat{B}_i^p$ to a basis of $W_{h,i}^{-1}$ by defining
\[
\phi_{i,j} = \sum_{k=1}^{n_i} \alpha_{jk} \widehat{B}_{i,k}^p, \quad j=1,\ldots, n_i-1
\]
for an orthogonal basis $A_j = (\alpha_{jk})_k$, $j=0,\ldots,n_i-1$ of $\R^{n_i}$. The orthogonality is beneficial for the algebraic construction of a biorthogonal basis as it requires less matrix multiplications. Our choice  of $A_j$ is specified in the following.

We set $\widetilde{A}_0 = (1,\ldots,1) \in \R^{n_i}$, which corresponds to the decomposition~\eqref{eq:decomposition_B_spline} of $\widehat B_i^p$, and extend it for $j=1,\ldots,n_i-1$:
\[
\widetilde{A}_j = (\underbrace{-1,\ldots,-1}_{j~\rm{times}}, ~j, ~0,\ldots) \in \R^{n_i}.
\]
The resulting basis is of the following form:
\begin{equation*}
\begin{matrix}
\widetilde A_0:\\
\widetilde A_1:\\
\widetilde A_2:\\
\widetilde A_3:\\
\widetilde A_4:\\
\vdots
\end{matrix}
\hspace{2em}
\begin{matrix}
\vphantom{\widetilde A_0} \phantom{-}1	& \phantom{-}1 	& \phantom{-}1 	& \phantom{-}1  &\phantom{-}1& \ldots\\\hdashline
\vphantom{\widetilde A_1} -1 		&  \phantom{-}1	&  	&   && \\
\vphantom{\widetilde A_2} -1 		& -1	& \phantom{-}2	&  & &\\
\vphantom{\widetilde A_3} -1 		& -1	& -1	& \phantom{-}3 & \\
\vphantom{\widetilde A_4} -1 		& -1	& -1	& -1 & \phantom{-}4 & \\
\vdots 	& \vdots	&\vdots	&\vdots	&	& \ddots
\end{matrix}
\end{equation*}
For stability of the basis, we prefer a rather symmetric definition of the basis, so we permute the basis as follows (presented for $n_i$ even):
\begin{equation*}
\pi = \left( \frac{n_i}{2}, ~\frac{n_i}{2}-1, ~\frac{n_i}{2} + 1, \ldots, ~1,~ n_i\right),
\end{equation*}
i.e. $\alpha_{j\pi(k)} = {\widetilde\alpha}_{jk}$.
This yields a pyramid-like structure of the constructed indices~$\alpha_{jk}$:
\begin{equation*}
\begin{matrix}
A_0:\\
A_1:\\
A_2:\\
A_3:\\
A_4:\\
\vdots
\end{matrix}
\hspace{2em}
\begin{matrix}
\vphantom{A_0} 1&\ldots & 1&1 &1 &1 &1 & \ldots & 1\\
\hdashline
\vphantom{ A_1} &	&	&	 \phantom{-}1& -1	&	&	&\\
\vphantom{ A_2} &	&	&	-1& -1	& \phantom{-}2	&	&\\
\vphantom{ A_3} &	&	 \phantom{-}3&	-1& -1	&-1	&	&\\
\vphantom{ A_4} &	&	-1&	-1& -1	&	-1& \phantom{-}4	&\\
&\reflectbox{$\ddots$}	&	&	& \vdots	&	&	& \ddots
\end{matrix}
\end{equation*}
The constructed basis functions $\phi_{i,j}$ are  illustrated at the bottom of  Figure~\ref{fig:broken_functions}. 
\end{remark}

Similar to the standard construction of dual basis functions~\cite{seitz:16}, 
we can construct $\widetilde \psi_i\in W_h^{-1}$, $i=1,\ldots,N$, as the biorthogonal basis to $(\varphi_i)_{i=1,\ldots,N}$:
\[
\left( \widetilde \psi_i, \varphi_j \right)_\Normsub = \delta_{ij}.
\]
More precisely, the biorthogonal basis is defined element by element. On each element, the local mass matrix is computed and inverted, yielding a local biorthogonal basis, which defines $\widetilde\psi_i$ on this element. 

We point out that the basis $(\varphi_i)_{i=1,\ldots,N}$ can be separated into the primal basis, which spans $W_h$ for $i\in \Ibsp$,  and the remaining functions enhancing the basis to span $W_h^{-1}$: $i\in \Iextra = \{1,\ldots,N\}\backslash \Ibsp$.

Based on this basis, a family of biorthogonal basis functions to the B-splines can be constructed:
\[
 \widetilde \psi_i + \sum_{k\in\Iextra} z_{ki} \widetilde \psi_k
\]
for any $z_{ki}$, $k\in \Iextra$, $i\in\Ibsp$.  Since the $\widetilde \psi_k$ have a local support, a suitable sparse choice of the $z_{ki}$ yields a local basis. The choice of the non-zero values is determined by local equation systems, which finish the construction.

For each $\hat k\in\Iextra$, let us choose  an index set $\Iloc(\hat k)\subset\Ibsp$ with $\left|\Iloc(\hat k)\right| = p+1$. 
The choice of the index set is discussed in the following Remark~\ref{rem:index_sets}.
Let $(p_i)_{i=1,\ldots,p+1}$ be a basis of $\Poly_p$,  e.g., monomials or a set of orthogonal polynomials. 
 Then we set $z_{\hat k i} = 0$ for $i\not \in \Iloc(\hat k)$ and solve the following square linear equation system for the remaining entries $z_{\hat k i}$, $i\in  \Iloc(\hat k)$:  
\begin{equation}\label{eq:square_local_system}  
 \sum_{i\in\Iloc(\hat k)} \left(p_l,  \widehat{B}^p_i\right)_\Normsub  z_{\hat k i} = 
 (p_l, \varphi_{\hat k})_\Normsub, \quad l = 1,\ldots,p+1.
\end{equation} 

The following diagram sketches how the sparsity structure of $z_{ki}$ depends on $\Iloc(\hat k)$ for $p=2$:
\begin{align*}
&\quad i\in \Ibsp \rightarrow \\
(z_{k,i})_{k,i} : \qquad 
\begin{matrix}
k\in\Iextra\\\downarrow \\  \\ \\
 \hdashline \hbox{\scriptsize $\hat k$} \\
 \hdashline  \\
\end{matrix}&
\begin{bmatrix}
 * & * & * & & & & & & & \\
 * & * & * & & &  & & & & \\ 
   & * & *  & * & &  & &  & & \\
   &   &	&	& \makebox(0,0)[cb]{$\ddots$} &  & &	& & \\
 \hdashline
   &	&   &	& 	&	* & * & *	& &\\
 \hdashline
 &	&	&	& 	& &\makebox(0,0)[cb]{\vspace{-1em}$\underbrace{\hspace{3em}}_{i\in\Iloc(\hat k)} $}&&	&\\
\end{bmatrix}\vspace{1em}
\end{align*}

\begin{remark}[Choice of the index sets]\label{rem:index_sets}
Let $\supp\widetilde{\psi}_{\hat k} = (\zeta_{k_l}, \zeta_{k_r})$, consider the central element $\widehat{e} = e_{\lfloor(k_l+k_r)/2\rfloor}$.  Then let $\Iloc(\hat k)$ contain the $p+1$ indices of the B-splines that are supported on the element $\widehat e$:
\begin{equation*}
\Iloc(\hat k) =
\left\{i\in \Ibsp, \widehat{e} \subset \supp \widehat{B}_{i}^p\right\} .
\end{equation*}
\end{remark}
The biorthogonal basis with local support and optimal approximation order~$p$ is then defined as
\begin{equation} 
\label{eq:definition_optimal_biorthogonal}
\widehat \psi_i = \widetilde \psi_i + \sum_{k\in\Iextra} z_{ki} \widetilde \psi_k, \quad \text{ for } i\in\Ibsp.
\end{equation}
The support of $\widehat  \psi_i$ is determined by the choice of the index sets.
Since  $z_{ki}\neq 0$  yields $\supp \widetilde \psi_k \subset \supp \widehat  \psi_i$, we can estimate the support of $\widehat  \psi_i$ by 
\[
 \supp \widehat  \psi_i
 \subset
 \supp \widehat{B}_i^p \cup 
 \bigcup_{k\in\Iloc(i)}
\supp \widetilde \psi_k.
\]
By construction,  the support of $\widetilde \psi_j$ overlaps partially with the support of $\widehat{B}_i^p$. Since it contains at most $p+1$ elements, the support for the presented construction contains at most $2p+1$ elements.

The following Theorem~\ref{thm:optimality_proof} concludes this section by proving the optimality of the constructed biorthogonal basis.

\begin{theorem}\label{thm:optimality_proof}
Assume that for all $\hat k\in\Iextra$, the equation system~\eqref{eq:square_local_system} has full rank. Then the basis $(\widehat \psi_i)_{i\in\Ibsp}$ defined by~\eqref{eq:definition_optimal_biorthogonal} is an optimal biorthogonal basis, i.e., it  fulfills  biorthogonality
\[
\left(\widehat{B}^p_i, \widehat \psi_j\right)_{\widehat \rho} = \delta_{ij},
\]
for $i,j\in\Ibsp$ 
and optimal convergence on $M_h = \spann_i \widehat \psi_i$, i.e.,  for any $\lambda\in H^{p+1}(\refInterface)$
\[
\inf_{\mu_h\in M_h} \| \mu_h-\lambda\|_{L^2(\refInterface)} \leq c h^{p+1} \|\lambda\|_{H^{p+1}(\refInterface)}.  
\]
\end{theorem}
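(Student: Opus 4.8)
The plan is to establish the two claims separately: biorthogonality is an algebraic consequence of how the $z_{ki}$ were chosen, and the approximation estimate follows once we show that $\Poly_p|_{\widehat\Gamma}\subset M_h$ (i.e., the space $M_h$ locally reproduces polynomials of degree $p$), after which a standard Bramble--Hilbert / Deny--Lions argument combined with the locality of the basis yields the $h^{p+1}$ rate.

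\textbf{Step 1: Biorthogonality.} Recall that $(\widetilde\psi_i)_{i=1,\dots,N}$ is biorthogonal to $(\varphi_i)_{i=1,\dots,N}$ with respect to $(\cdot,\cdot)_{\widehat\rho}$, that $\varphi_i=\widehat B_i^p$ for $i\in\Ibsp$, and that the remaining $\varphi_{\hat k}$, $\hat k\in\Iextra$, together with the $\widehat B_i^p$ span $W_h^{-1}$. For $i,j\in\Ibsp$ I would compute directly
\[
\bigl(\widehat B_i^p,\widehat\psi_j\bigr)_{\widehat\rho}
=\bigl(\widehat B_i^p,\widetilde\psi_j\bigr)_{\widehat\rho}
+\sum_{k\in\Iextra} z_{kj}\,\bigl(\widehat B_i^p,\widetilde\psi_k\bigr)_{\widehat\rho}
=\delta_{ij}+\sum_{k\in\Iextra} z_{kj}\cdot 0=\delta_{ij},
\]
where the first term is $\delta_{ij}$ because $\widehat B_i^p=\varphi_i$ and the sum vanishes because $\widetilde\psi_k$ for $k\in\Iextra$ is biorthogonal to every $\varphi_i$, in particular to $\varphi_i=\widehat B_i^p$ with $i\in\Ibsp$. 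Thus biorthogonality holds regardless of the particular choice of $z_{kj}$ — the $z$'s only affect polynomial reproduction, not biorthogonality.

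\textbf{Step 2: Polynomial reproduction.} This is the step where the full-rank hypothesis on \eqref{eq:square_local_system} enters. I want to show every $p\in\Poly_p$ (restricted to $\widehat\Gamma$, and more precisely I should track what "$\Poly_p$ on $\widehat\Gamma$" means globally on the broken space $W_h^{-1}$) lies in $M_h=\spann_i\widehat\psi_i$. Expand a fixed $q\in\Poly_p$ in the full biorthogonal system: $q=\sum_{m=1}^N (q,\varphi_m)_{\widehat\rho}\,\widetilde\psi_m = \sum_{i\in\Ibsp}(q,\widehat B_i^p)_{\widehat\rho}\,\widetilde\psi_i + \sum_{\hat k\in\Iextra}(q,\varphi_{\hat k})_{\widehat\rho}\,\widetilde\psi_{\hat k}$. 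On the other hand, $\sum_{i\in\Ibsp}\beta_i\widehat\psi_i = \sum_{i\in\Ibsp}\beta_i\widetilde\psi_i+\sum_{\hat k\in\Iextra}\bigl(\sum_{i\in\Iloc(\hat k)} z_{\hat k i}\beta_i\bigr)\widetilde\psi_{\hat k}$. Choosing $\beta_i=(q,\widehat B_i^p)_{\widehat\rho}$ and matching the $\widetilde\psi_{\hat k}$-coefficients requires, for each $\hat k\in\Iextra$, $\sum_{i\in\Iloc(\hat k)} z_{\hat k i}\,(q,\widehat B_i^p)_{\widehat\rho} = (q,\varphi_{\hat k})_{\widehat\rho}$; but this is exactly \eqref{eq:square_local_system} tested against $q$, which holds for every $q\in\Poly_p$ precisely because \eqref{eq:square_local_system} was solved for all basis elements $p_l$ of $\Poly_p$ and \emph{is solvable} by the full-rank assumption. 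Hence $q=\sum_{i\in\Ibsp}(q,\widehat B_i^p)_{\widehat\rho}\,\widehat\psi_i\in M_h$. \textbf{The main obstacle I anticipate} is being careful here about the crosspoint modification: $\Ibsp$ may omit boundary B-splines, so $\Poly_p$ is reproduced only in the interior / away from $\crosspoints$, and one must phrase the reproduction as: on every element $\widehat e$ not touching $\crosspoints$, $M_h$ contains $\Poly_p|_{\widehat e}$, while near $\crosspoints$ one only has the reproduction compatible with the zero boundary constraint. This is the analogue of the finite element situation in \cite{oswald:02,lamichhane:07} and the estimate below must be localized accordingly (losing nothing in the rate since the boundary layer has width $O(h)$ and $\lambda\in H^{p+1}$ controls it).

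\textbf{Step 3: From local reproduction to the global estimate.} Define the quasi-interpolation operator $Q_h\lambda := \sum_{i\in\Ibsp}(\lambda,\widehat B_i^p)_{\widehat\rho}\,\widehat\psi_i\in M_h$ (using the biorthogonality, this is the natural dual projection). By Step 2, $Q_h q = q$ for $q\in\Poly_p$ on each interior element; by the locality established before the theorem, $\supp\widehat\psi_i$ and $\supp\widehat B_i^p$ are contained in $O(1)$ neighbouring elements, so $Q_h\lambda|_{\widehat e}$ depends only on $\lambda|_{\widetilde\omega(\widehat e)}$ for a fixed-size patch $\widetilde\omega(\widehat e)$. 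Combining $L^2$-stability of $Q_h$ on each element (which follows from the uniform boundedness of the local biorthogonal systems under $h$-refinement — the weight $\widehat\rho$ is bounded above and below by the regularity assumption on $\mathbf F$, and the local mass matrices scale like $h$ with uniformly bounded condition number after the standard affine rescaling) with the local polynomial reproduction, the Bramble--Hilbert lemma on the reference element gives $\|\lambda-Q_h\lambda\|_{L^2(\widehat e)}\le c\,h^{p+1}\,|\lambda|_{H^{p+1}(\widetilde\omega(\widehat e))}$ on each element. Squaring, summing over the finitely-overlapping patches, transforming back to $\refInterface$ via the bi-Lipschitz $C^\infty$ map $\mathbf F$ (which changes norms by $h$-independent constants), and bounding $\inf_{\mu_h}\|\mu_h-\lambda\|_{L^2(\refInterface)}\le\|\lambda-Q_h\lambda\|_{L^2(\refInterface)}$, yields the claimed bound $c\,h^{p+1}\|\lambda\|_{H^{p+1}(\refInterface)}$. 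The only remaining care is the $O(h)$-wide crosspoint layer, handled as noted by a trace/scaling estimate absorbing it into the same power of $h$.
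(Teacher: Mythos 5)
Your proposal is correct and follows essentially the same route as the paper: biorthogonality holds for any choice of the $z_{ki}$, and the full-rank local systems~\eqref{eq:square_local_system} provide exactly the coefficient matching needed for polynomial reproduction of the quasi-interpolant, from which the $L^2$-estimate follows by the standard argument as in the finite element case of~\cite{oswald:02}. One remark: your crosspoint caveat in Step~2 is unnecessary, since the B-splines removed from $\Ibsp$ by the crosspoint modification are placed in $\Iextra$ and their local systems are solved as well, so your own matching argument already yields global (not merely interior) reproduction of $\Poly_p$.
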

\begin{proof}
The proof follows the ideas of the finite element case, see~\cite{oswald:02}.
By construction of $\left( \widetilde \psi_i\right)_{i=1,\ldots,N}$, for any choice of $(z_{ki})_{k,i}$,
\begin{equation*} 
 \widetilde \psi_i + \sum_{k\in\Iextra} z_{ki} \widetilde \psi_k, \quad \text{ for } i\in\Ibsp,
\end{equation*}
is a biorthogonal basis to $\left(\widehat{B}^p_i\right)_{i\in\Ibsp}$.

Now, let us show that the choice of  $z_{ki}$ for $k\in\Iextra$ and $i=\Ibsp$ guarantees  polynomial reconstruction. 
Therefore, we show that the quasi-interpolation 
\begin{equation*} 
\operatorname{\mathcal{Q}}f = \sum_{i\in\Ibsp} \left(f,  \widehat{B}^p_i\right)_{\Normsub} \widehat \psi_i
\end{equation*} 
is invariant for polynomials $\Poly_p$, which is equivalent to
\[
(p_l, \varphi_j)_{\Normsub} = \sum_{i\in\Ibsp} \left(p_l,  \widehat{B}^p_i\right)_{\Normsub} \left( \widehat \psi_i, \varphi_j\right)_{\Normsub},
\quad \text{ for any } j=1,\ldots,N,~l=1,\ldots p+1. 
\]

For $j\in\Ibsp$ it holds that $\varphi_j = \widehat{B}^p_j$, and  we can directly use the biorthogonality of $\widehat \psi_i$ and $\varphi_j$:
\[
 \sum_{i\in\Ibsp} \left(p_l,  \widehat{B}^p_i\right)_{\Normsub} \left( \widehat  \psi_i, \varphi_j\right)_{\Normsub}
 = 
 \sum_{i\in\Ibsp} \left(p_l,  \widehat{B}^p_i\right)_{\Normsub} \delta_{ij} =  \left(p_l,  \varphi_j\right)_{\Normsub}. 
 \]

For $j\in\Iextra$, biorthogonality cannot be directly used, but the considered  equation system~\eqref{eq:square_local_system} yields, since $\Ibsp\cap\Iextra=\emptyset$ and $z_{ji}= 0$ for $i\not\in\Iloc(j)$:
\begin{align*}
 \sum_{i\in\Ibsp} \left(p_l,  \widehat{B}^p_i\right)_{\Normsub} \left( \widehat \psi_i, \varphi_j\right)_{\widehat \rho} &=
 \sum_{i\in\Ibsp} \left(p_l,  \widehat{B}^p_i\right)_{\Normsub} \left( \widetilde \psi_i + \sum_{k\in\Iextra} z_{ki} \widetilde \psi_k , \varphi_j\right)_{\Normsub}
\\&\hspace{-1em} = 
 \sum_{i\in\Ibsp} \left(p_l,  \widehat{B}^p_i\right)_\Normsub  z_{ji} =
 \sum_{i\in\Iloc(j)} \left(p_l,  \widehat{B}^p_i\right)_\Normsub  z_{j i} = 
 (p_l, \varphi_{j})_\Normsub .
\end{align*}
\end{proof}

At the end of this construction, the biorthogonal basis functions can be scaled as desired. A common scaling, e.g.~\cite{seitz:16}, is
\[
\int_\refInterface  N_\mathbf{i} \psi_\mathbf{j} \dsurface
= 
\delta_{\mathbf {i} \mathbf{j}}
\,
\int_\refInterface  N_\mathbf i \dsurface
\]
The newly constructed biorthogonal basis functions are shown in Figure~\ref{fig:basis_functions}, where they are compared to the naive biorthogonal basis functions from \cite{seitz:16} and the primal basis functions. 
\begin{figure}
\hbox{
\setlength{\unitlength}{0.0500bp}%
\begin{picture}(7200,2520)(0,0)%
  \put(6839,200){\makebox(0,0){\strut{}$5h$}}%
  \put(5555,200){\makebox(0,0){\strut{}$4h$}}%
  \put(4271,200){\makebox(0,0){\strut{}$3h$}}%
  \put(2988,200){\makebox(0,0){\strut{}$2h$}}%
  \put(1704,200){\makebox(0,0){\strut{}$h$}}%
  \put(420,200){\makebox(0,0){\strut{}$0$}}%
  \put(300,400){\makebox(0,0)[r]{\strut{}$0$}}%
  \put(300,2280){\makebox(0,0)[r]{\strut{}$1$}}%
  \put(750,1850){\makebox(0,0){\strut{}\large\color{blue}$\widehat N_1$}}%
  \put(3650,1950){\makebox(0,0){\strut{}\large$\widehat N_4$}}%
   \put(0,0){\includegraphics[scale=1]{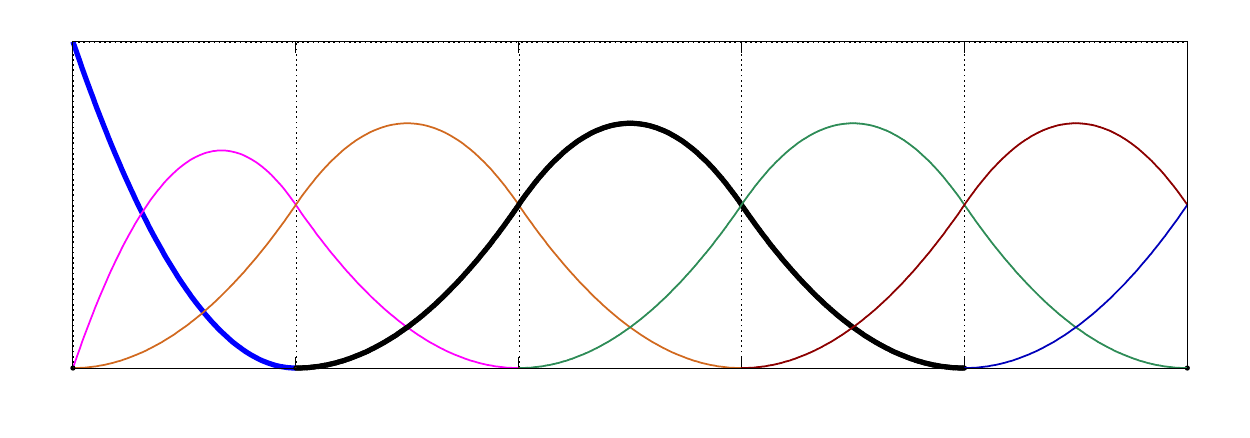}}
\end{picture}%
}
\hbox{
\setlength{\unitlength}{0.0500bp}%
\begin{picture}(7200,2520)(0,0)%
  \put(6839,200){\makebox(0,0){\strut{}$5h$}}%
  \put(5555,200){\makebox(0,0){\strut{}$4h$}}%
  \put(4271,200){\makebox(0,0){\strut{}$3h$}}%
  \put(2988,200){\makebox(0,0){\strut{}$2h$}}%
  \put(1704,200){\makebox(0,0){\strut{}$h$}}%
  \put(420,200){\makebox(0,0){\strut{}$0$}}%
  \put(300,2280){\makebox(0,0)[r]{\strut{}$5$}}%
  \put(300,1425){\makebox(0,0)[r]{\strut{}$0$}}%
  \put(300,570){\makebox(0,0)[r]{\strut{}$-5$}}%
  \put(685,1950){\makebox(0,0){\strut{}\color{blue}\large$\psi_1^{\scriptscriptstyle \rm{ naiv } }$}}%
  \put(3995,2120){\makebox(0,0){\strut{}\large $\psi_4^{\scriptscriptstyle \rm{ naiv } }$}}%
   \put(0,0){\includegraphics[scale=1]{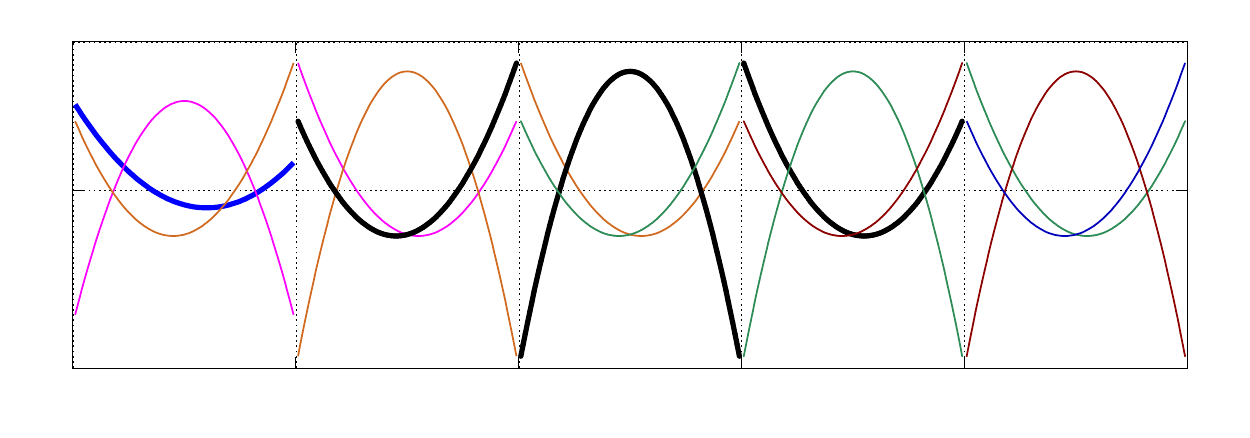}}
\end{picture}%
}
\hbox{
\setlength{\unitlength}{0.0500bp}%
\begin{picture}(7200,2520)(0,0)%
  \put(6839,200){\makebox(0,0){\strut{}$5h$}}%
  \put(5555,200){\makebox(0,0){\strut{}$4h$}}%
  \put(4271,200){\makebox(0,0){\strut{}$3h$}}%
  \put(2988,200){\makebox(0,0){\strut{}$2h$}}%
  \put(1704,200){\makebox(0,0){\strut{}$h$}}%
  \put(420,200){\makebox(0,0){\strut{}$0$}}%
  \put(300,2280){\makebox(0,0)[r]{\strut{}$5$}}%
  \put(300,1340){\makebox(0,0)[r]{\strut{}$0$}}%
  \put(300,400){\makebox(0,0)[r]{\strut{}$-5$}}%
  \put(650,1900){\makebox(0,0){\strut{}\color{blue}\large$\psi_1$}}%
  \put(3650,2060){\makebox(0,0){\strut{}\large $\psi_4$}}%
   \put(0,0){\includegraphics[scale=1]{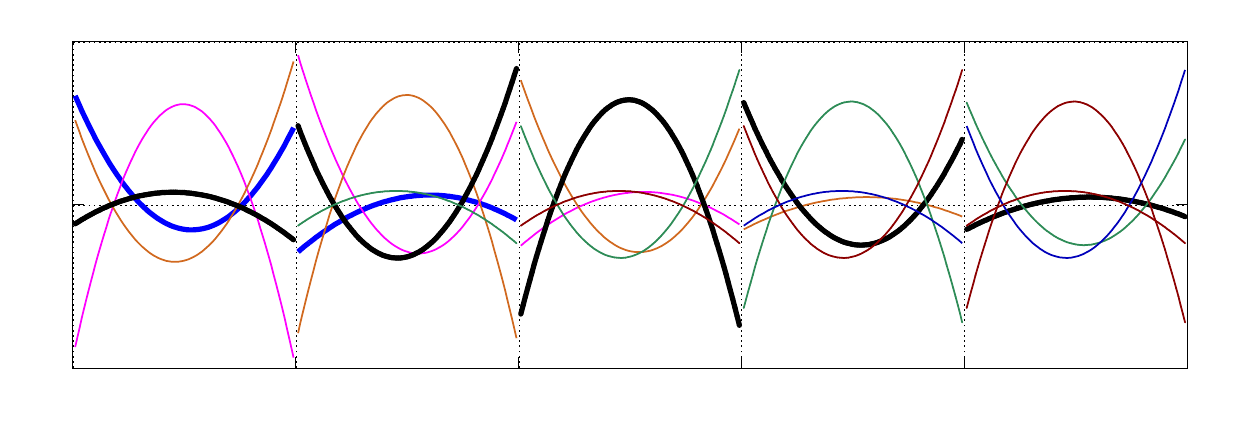}}
\end{picture}%
}
\caption{Primal (top), naive dual (middle) and optimal dual (bottom) basis functions shown on the first five elements of a quadratic spline patch with the first and fourth basis function being highlighted in bold.}
\label{fig:basis_functions}
\end{figure}

\subsection{Multilateral construction by tensorization} 
We set $\rho\circ \mathbf F = \NURBSWeight / \det{\nabla_\gammahat \mathbf{F}}$, such that
 $\widehat \rho = 1$. Since the interface in the parametric space is a direct tensor-product of one-dimensional spline spaces, we can construct the biorthogonal basis as a tensor product.

\begin{figure}
\begin{center}
\unitlength 0.60cm
\begin{picture}(20.0,5.0)
\put(0.0,0.5){\includegraphics[height=4\unitlength]{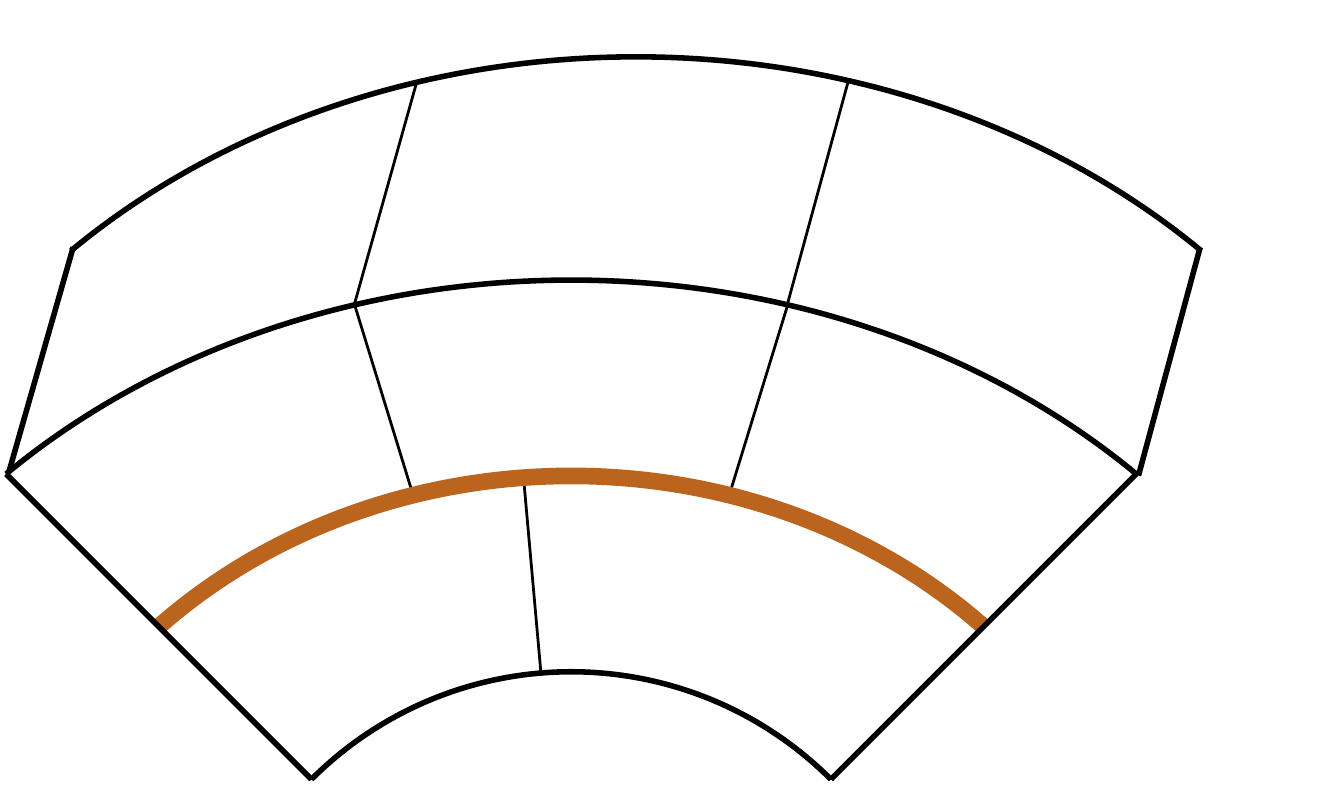}}
\put(9.1,0.0){\includegraphics[height=4\unitlength]{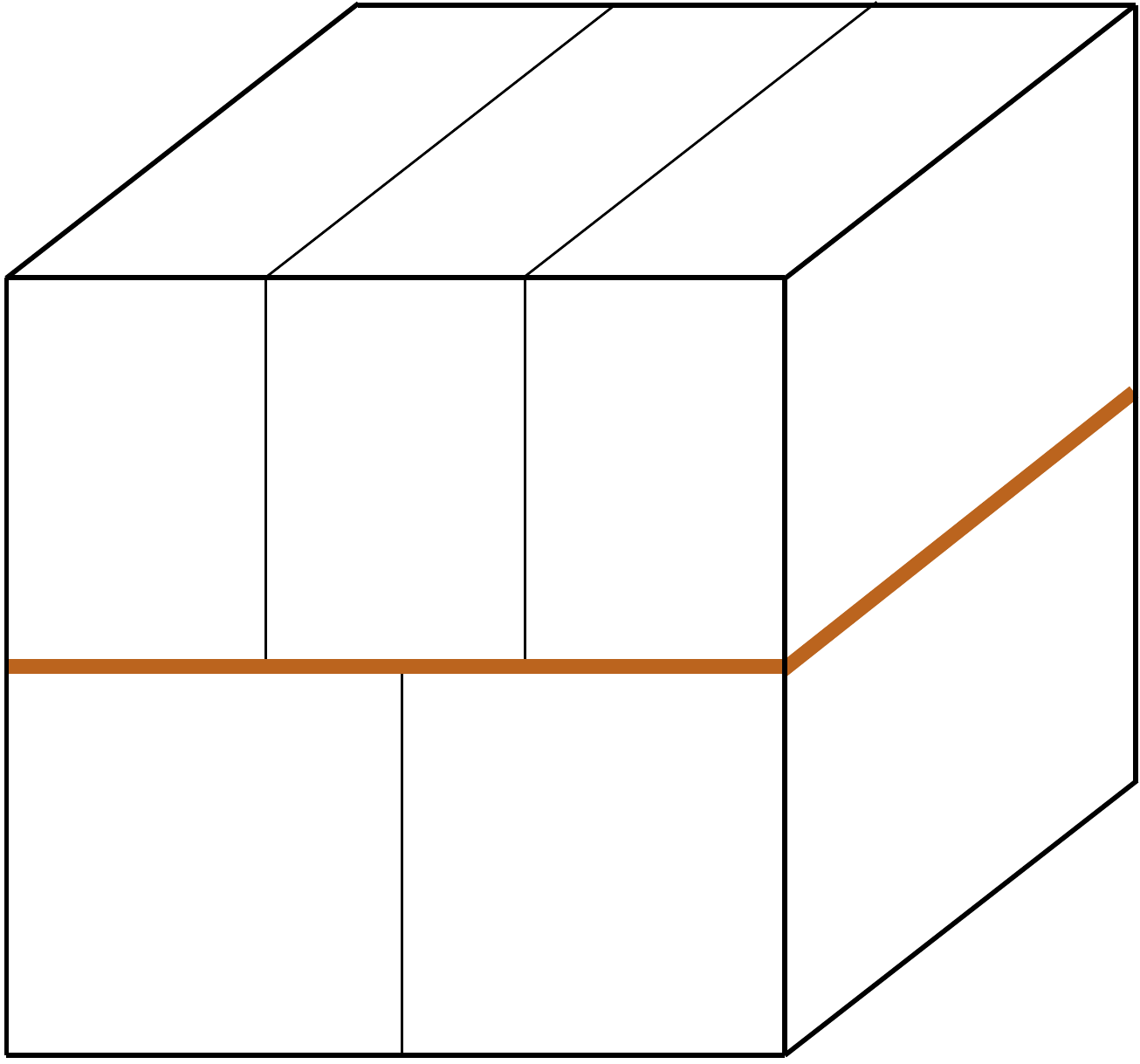}}
\put(16.5,0.5){\includegraphics[height=3.5\unitlength]{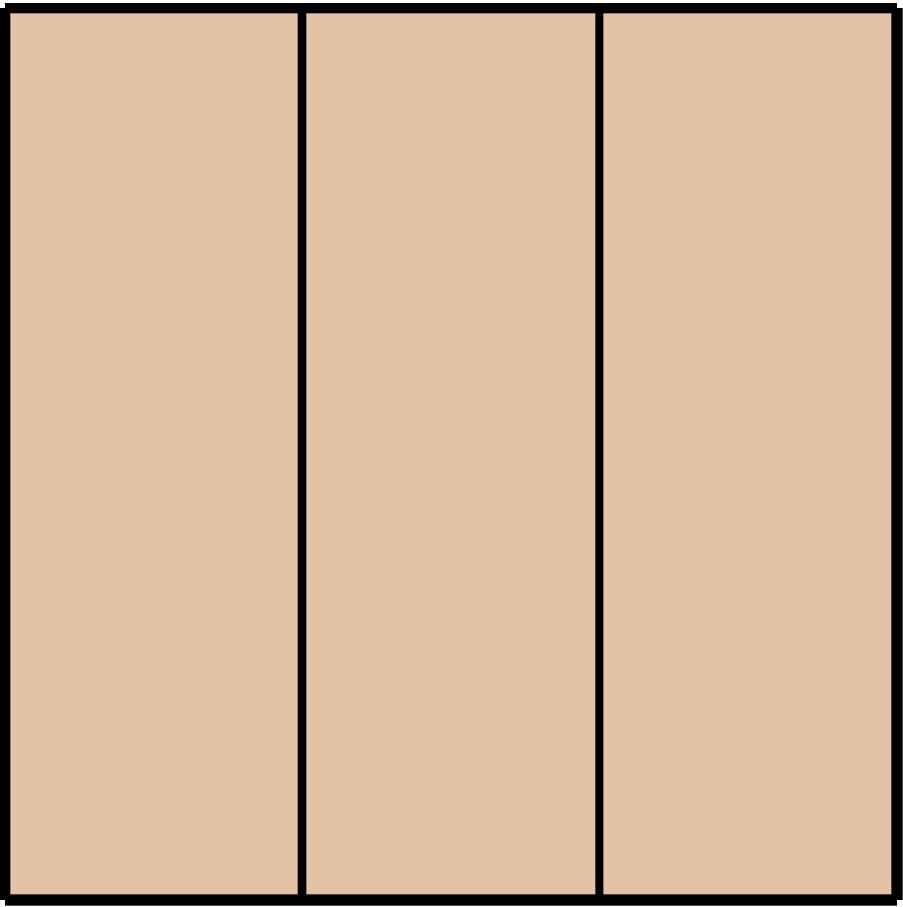}}
\put(0.5,1.7){{$\color{orange}\refInterface$}}
\put(16.6,4.3){\large{$\color{orange}\gammahat ~\widetilde{=}~ (0,1)^2$}}
\put(6.1,3.5){{$\widehat \Omega = \mathbf F^{-1}( \Omega)$}}
\put(6.0,2.7){\includegraphics[width=3\unitlength]{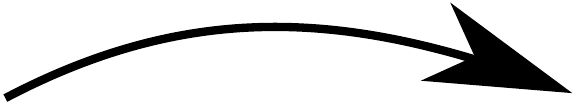}}
\put(13.5,2.5){\includegraphics[width=3\unitlength]{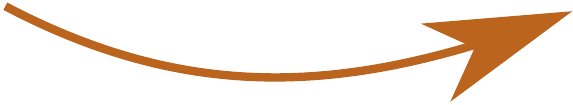}}
\end{picture}
\end{center}
\caption{Illustration of the mapping to the parametric space. }
\label{fig:parametric_interface}
\end{figure} 
 
As a geometric interpretation, $\widehat \rho = 1$ means that the coupling condition is posed in the parametric space, instead of the geometric space, see Figure~\ref{fig:parametric_interface}. This is valid, since the surface measure on the parametric space and the geometric space are mathematically equivalent.
The advantage is that we can directly profit from the tensor product construction on $\refInterface=(0,1)^2\times\{0\}$. With the tensor product B-spline basis
\[
\widehat{B}_{(i,j)}(\zeta_1, \zeta_2) = \widehat{B}_i^p(\zeta_1)\, \widehat{B}_j^p(\zeta_2),
\]
the tensor product of a univariate biorthogonal basis $\psi_i$, viz.
\[
\widehat{\psi}_{(i,j)}(\zeta_1, \zeta_2) = \widehat{\psi}_i^p(\zeta_1)\, \widehat{\psi}_j^p(\zeta_2),
\]
forms a multivariate biorthogonal basis:
\begin{align} \label{eq:2d_biorthogonal}
&\int_\refInterface \widehat{B}_{(i,j)}(\zeta_1, \zeta_2) \, \widehat{\psi}_{(k,l)}(\zeta_1, \zeta_2)~\mathrm{d}(\zeta_1, \zeta_2) \notag \\&\qquad=
\int_0^1 \widehat{B}_{i}(\zeta_1) \widehat{\psi}_{k}(\zeta_1)~\mathrm{d}\zeta_1 \,
\int_0^1 \widehat{B}_{j}(\zeta_2) \widehat{\psi}_{l}(\zeta_2)~\mathrm{d}\zeta_2  
= \delta_{ik} \delta_{jl}.
\end{align}
Of course, the polynomial reproduction order is retained with the tensor product construction.

For different choices of $\rho$, the integrals in~\eqref{eq:2d_biorthogonal} are weighted with $\widehat \rho \neq 1$. Then, in general, the integral cannot be separated into two independent integrals, hence the constructed basis is not biorthogonal.

With two-dimensional interfaces, `crosspoints' are entire boundary faces, due to our regularity assumptions of Section~\ref{subsec:computational_domain}.
By a simple crosspoint modification for the one-dimensional bases that are used in the tensor-product construction, we conveniently get a crosspoint modification also of the two-dimensional basis.
Note that, when the `crosspoints' are only a subset of the boundary faces, still a crosspoint modification can safely be performed on the entire boundary face.
\section{Numerical results}
\label{sec:numerical_results}
In the following, we test our newly constructed biorthogonal basis on three numerical examples and compare it with the naive biorthogonal basis from \cite{seitz:16} as well as standard Lagrange multipliers. Where an exact solution is  available, the $L^2$-error is considered, in the other cases convergence is studied	 by observing the internal energy as well as point evaluations. 
All numerical computations are performed with the in-house research code BACI~\cite{baci}.
\subsection{Plate with a hole}
As the first example, we consider the well-known benchmark of an infinite plate with a hole, e.g.~\cite[\S58]{muskhelishvili:77}. Due to symmetry, only a quarter of the plate is considered, and the infinite geometry is cut with the exact traction being applied as a boundary condition. The exact setting is illustrated in Figure~\ref{fig:plateSetup}.

We consider the equations of linear elasticity, where the Cauchy stress $\stress(\mathbf u)$  depends linearly on the strain $\strain(\mathbf u) =\sym{ \nabla \mathbf{u}}$ via Hooke's law as $\stress(\mathbf u) =2\lameM \strain(\mathbf u) + \lameL \mattrace \strain(\mathbf u) \idmat$, with the trace operator $\mattrace \strain = \sum_i \strain_{ii}$ and the Lam\'e parameters $\lameL, \lameM$. 
The Lam\'e parameters can be computed by
\[
\lambda= \frac{\nu E}{(1+\nu)(1-2\nu)},\qquad \mu = \frac{E}{2(1+\nu)}.
\]
We measure convergence in the energy norm
\[
\left\| \mathbf{u} - \mathbf{u}_h \right\|_{\rm E}^2  = \sum_k \int_{\Omega_k} \stress(\mathbf{u} - \mathbf{u}_h ) : \strain(\mathbf{u} - \mathbf{u}_h ) \dx,
\]
where the optimal convergence order is $\mathcal{O}(h^{p})$, see~\cite[Theorem 6]{brivadis:15}.

\begin{figure}[h!]
\begin{center}
\includegraphics[width=.5\textwidth]{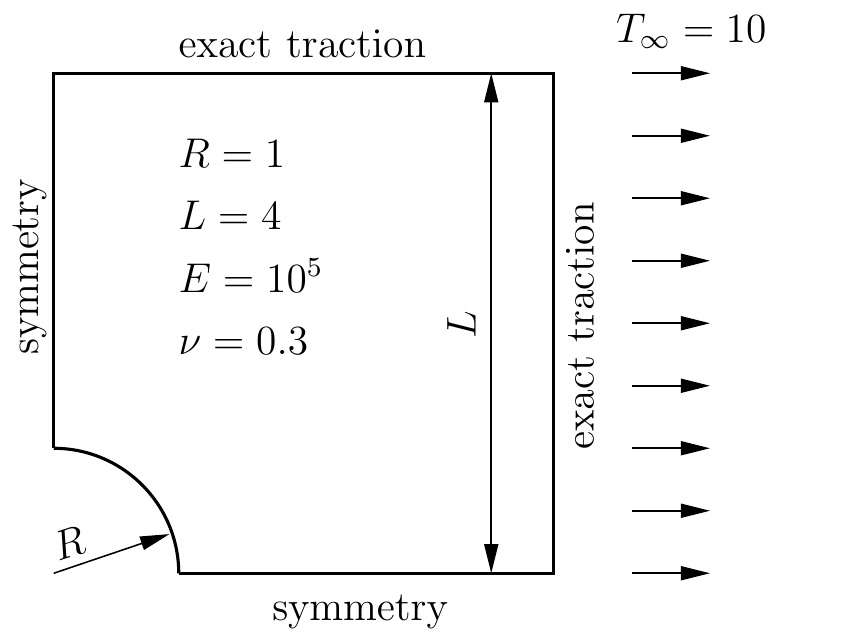}
\caption{Plate with a hole: Geometry, boundary conditions and material parameters.}
\label{fig:plateSetup}
\end{center}
\end{figure}

Different geometry parametrizations are considered, as shown in Figure~\ref{fig:plateGeometries}. 
The first case (Figure~\ref{plateGeometriesA})  is a two-patch setting with a straight interface, where the parametrization of the interface is the same on both subdomains. 
In the second case (Figure~\ref{plateGeometriesB}) the same subdomains are considered, but with a different parametrization, such that the parametrizations along the interface do no longer match. 
We note, that this is a situation, where the construction of~\cite{zou:17} is not exact, but requires additional steps of refinement.
In the third case (Figure~\ref{plateGeometriesC}), the subdomains are coupled across a curved interface.

\begin{figure}[h!]
\subfloat[straight interface, matching parametrization]
{
\hbox{
\setlength{\unitlength}{0.04300bp}%
 \begin{picture}(2300,2200)(0,0)%
  \put(750,1800){\makebox(0,0){$\xi=1/2$}}
  \put(1050,1650){\vector(1,-1){300}}
  \put(1350,770){\makebox(0,0){$\xi=1/2$}}
  \put(1220,930){\vector(1,3){130}}
   \put(0,0){\includegraphics[width = 2300\unitlength]{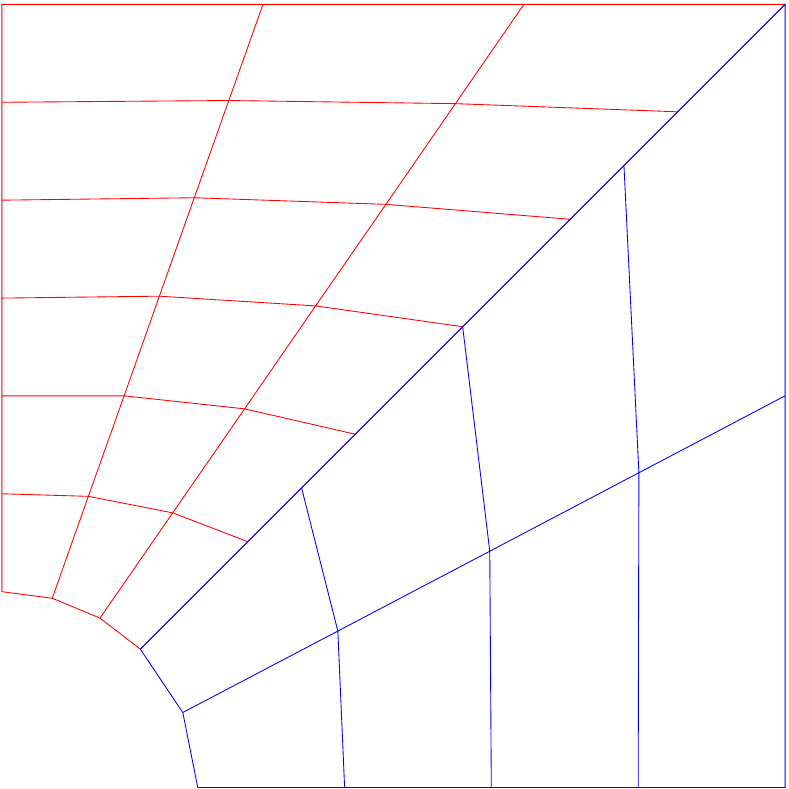}}
\end{picture}
}
\label{plateGeometriesA}
}\hfill
\subfloat[straight interface, non-matching parametrization]
{
\hbox{
\setlength{\unitlength}{0.04300bp}%
 \begin{picture}(2300,2200)(0,0)%
  \put(750,1800){\makebox(0,0){$\xi=1/2$}}
  \put(1050,1650){\vector(1,-1){300}}
  \put(1060,470){\makebox(0,0){$\xi=1/2$}}
  \put(930,630){\vector(1,3){130}}
   \put(0,0){\includegraphics[width = 2300\unitlength]{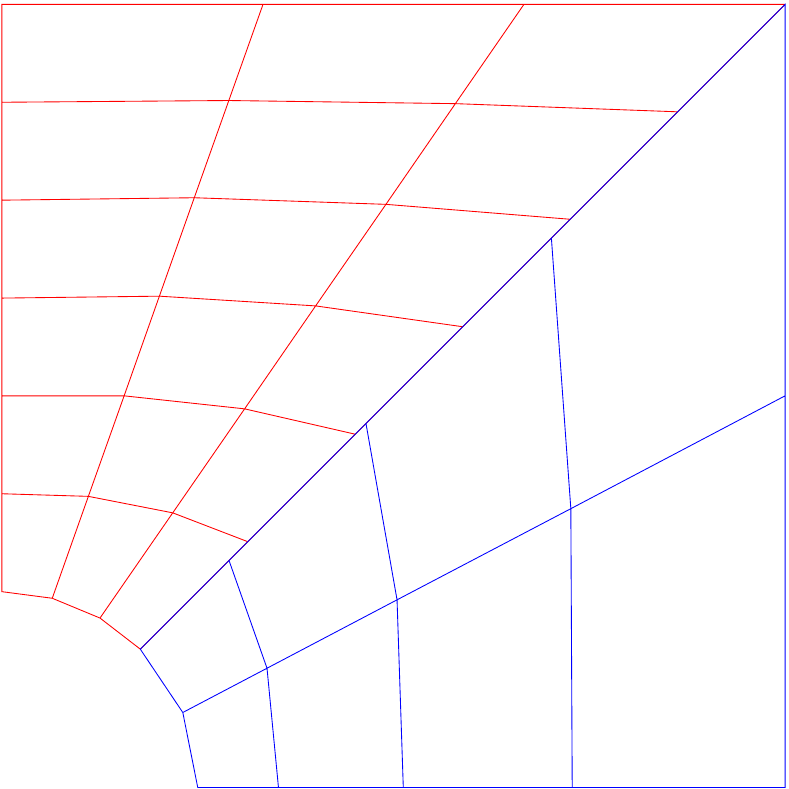}}
\end{picture}
}
\label{plateGeometriesB}
}\hfill
\subfloat[curved interface, matching parametrization]
{
\hbox{
\setlength{\unitlength}{0.04300bp}%
 \begin{picture}(2300,2200)(0,0)%
  \put(550,1950){\makebox(0,0){$\xi=1/2$}}
  \put(850,1800){\vector(1,-1){300}}
  \put(1195,920){\makebox(0,0){$\xi=1/2$}}
  \put(1065,1080){\vector(1,3){130}}
   \put(0,0){\includegraphics[width = 2300\unitlength]{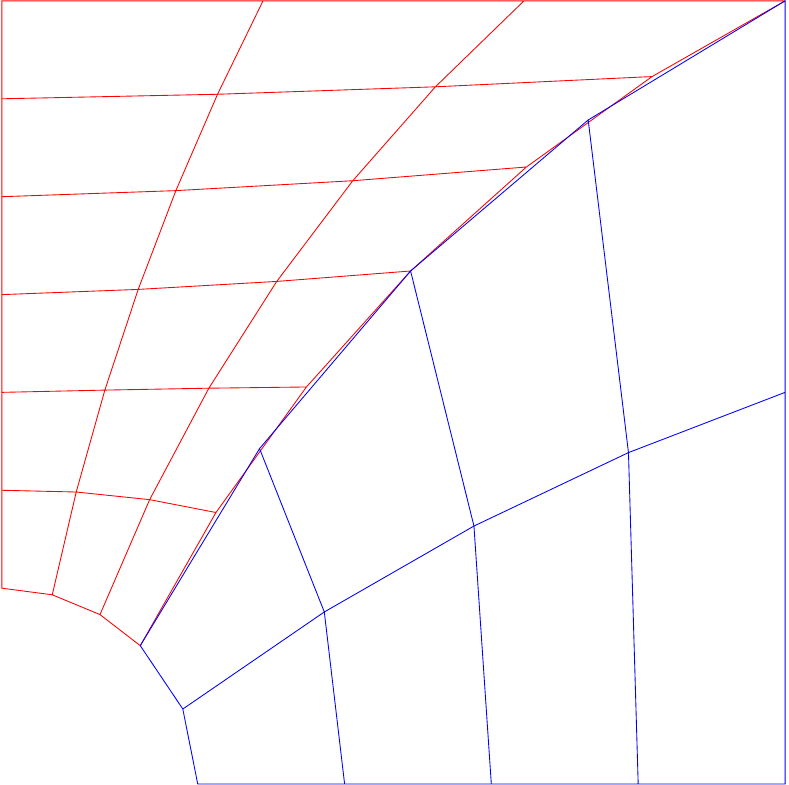}}
\end{picture}
}
\label{plateGeometriesC}
}
\caption{Different geometric setups of the plate with hole, shown for the mesh ratio $2:3$. }
\label{fig:plateGeometries}
\end{figure}

For all three setups, convergence for quadratic splines and different Lagrange multiplier bases is presented in Figure~\ref{fig:convergence_p_2}.
We observe an optimal order convergence for the newly constructed biorthogonal basis functions ('optimal'), with similar error values as with a standard Lagrange multiplier ('std'). In comparison, the naive, element-wise biorthogonal basis functions ('ele dual') as considered in~\cite{seitz:16} show suboptimal convergence, especially when  the slave mesh is coarser than the master mesh.

\begin{figure}
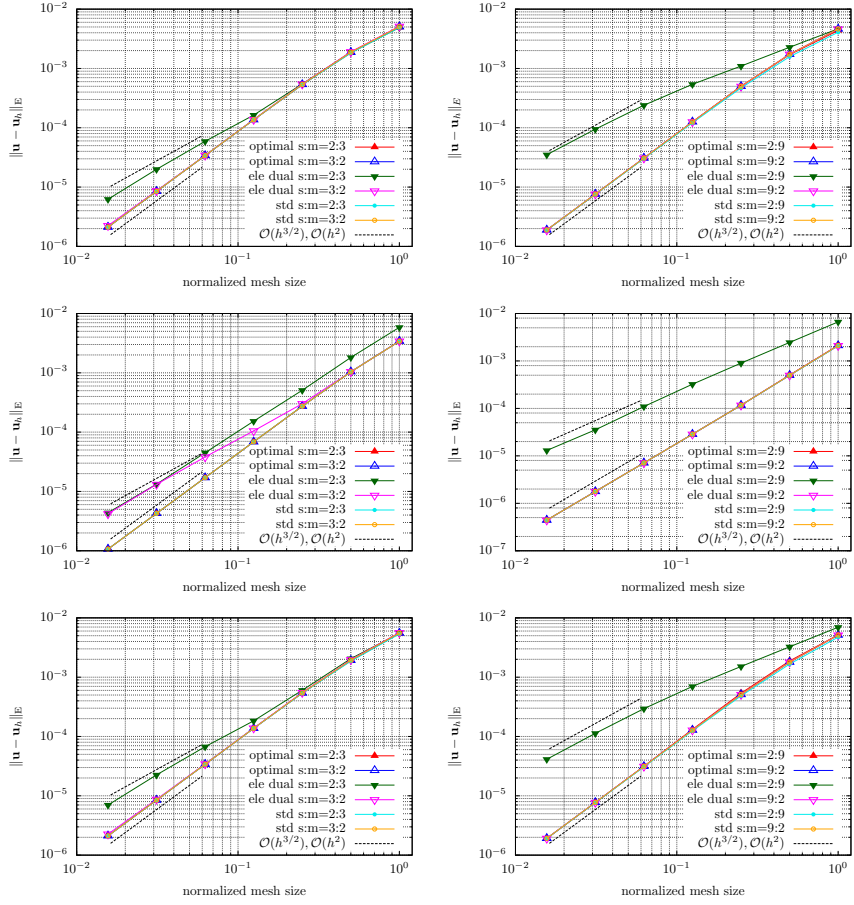

\scalebox{.5}{
\input{figures/plate/straight_MatchingParam_2nd/convergence.tex}
\input{figures/plate/straight_MatchingParam_2nd_2.9/convergence.tex}
}
\\
\scalebox{.5}{
\input{figures/plate/straight_nonMatchingParam_2nd/convergence.tex}
\input{figures/plate/straight_nonMatchingParam_2nd_2.9/convergence.tex}
}\\
\scalebox{.5}{
\input{figures/plate/curved_MatchingParam_2nd/convergence.tex}
\input{figures/plate/curved_MatchingParam_2nd_2.9/convergence.tex}
}
\caption{Convergence of the energy error for $p=2$. Comparison of the new biorthogonal basis ('optimal'), standard dual basis ('ele dual') and standard Lagrange multipliers ('std'). Left: mesh ratio $2:3$. Right: mesh ratio $2:9$. From top to bottom the three mesh cases (a) to (c) from Figure \ref{fig:plateGeometries} are considered. }
\label{fig:convergence_p_2}
\end{figure}

The same comparisons for cubic splines are  shown in Figure~\ref{fig:convergence_p_3}. Again, we see an optimal order convergence of the optimal biorthogonal basis functions as expected theoretically, while the suboptimality of the naive biorthogonal basis functions from~\cite{seitz:16} becomes even more apparent.
However, when the slave mesh is coarser than the master mesh, the values of the error are larger than for the standard Lagrange multiplier case. Since the gap gets smaller with further refinements, this seems to be a pre-asymptotical effect. We note that for a fine slave mesh, no significant suboptimality can be observed in the pre-asymptotics.

\begin{figure}
\scalebox{.5}{
\input{figures/plate/straight_MatchingParam_3rd/convergence.tex}
\input{figures/plate/straight_MatchingParam_3rd_2.9/convergence.tex}
}
\\
\scalebox{.5}{
\input{figures/plate/straight_nonMatchingParam_3rd/convergence.tex}
\input{figures/plate/straight_nonMatchingParam_3rd_2.9/convergence.tex}
}\\
\scalebox{.5}{
\input{figures/plate/curved_MatchingParam_3rd/convergence.tex}
\input{figures/plate/curved_MatchingParam_3rd_2.9/convergence.tex}
}
\caption{Convergence of the energy error for $p=3$. Comparison of the new biorthogonal basis ('optimal'), standard dual basis ('ele dual') and standard Lagrange multipliers ('std'). Left: mesh ratio $2:3$. Right: mesh ratio $2:9$. From top to bottom the three mesh cases (a) to (c) from Figure \ref{fig:plateGeometries} are considered. }
\label{fig:convergence_p_3}
\end{figure}

In summary,  we have observed optimal convergence rates in all cases for the newly constructed biorthogonal basis functions, while suboptimal rates were seen for the naive biorthogonal basis from \cite{seitz:16}. When the finer side is chosen as the slave side, the error values were the same as for standard Lagrange multipliers. Only when the slave side is coarser, a suboptimal preasymptotic evolves. Hence, for the optimal biorthogonal basis, it is especially important to choose the finer side of the interface as the slave side, whenever this is possible. 

\subsection{Bimaterial annulus} \label{sec:numerics:annulus}
In the second example, we consider a two-dimensional bimaterial setting. The bimaterial annulus shown in  Figure~\ref{fig:geometry_bimaterial} consists of a soft material ($E_1=	1\mathrm{e}3$, $\nu_1 = 0.3$) with a thin hard  inclusion ($E_2=1\mathrm{e}5$, $\nu_2=0.3$) with an elliptic interface.  
The considered geometry parameters are:
\begin{align*}
r_{\rm i}=0.75, \quad
&r_{\rm o}=1,
\\
a_1  = 0.55\,(r_{\rm i}+r_{\rm o}-1) = 0,95975 ,\quad
&b_1 =  0.5\,(r_{\rm i}+r_{\rm o}-1)/1.1 \approx 0,7932,\\
a_2 = 0.55\,(r_{\rm i}+r_{\rm o}+1) = 0,96525, \quad
&b_2 = 0.5\,(r_{\rm i}+r_{\rm o}+1)/1.1 \approx 0,7977.
\end{align*}
Inside the annulus, a constant unit-pressure is applied, and  the outer boundary is a homogeneous Neumann boundary. The rigid body modes are removed by restricting the corresponding deformations.

The different stiffnesses and the thin geometry of the inclusion demand for anisotropic elements and  different mesh-sizes  in the different subdomains. The interior subdomain consists of $20$ elements in the  angular direction and three elements in the radial direction, the thin inclusion consists of $68$ elements in the angular direction and one element in the radial direction, and the outer subdomain consists of $24$ element in the angular direction and two elements in the radial direction.

\begin{figure}
\begin{center}
\scalebox{.95}{
\begin{tikzpicture} 
\setlength{\unitlength}{3cm}%
\draw (0,0) circle [radius=1.0\unitlength];
\draw (0,0) circle [radius=0.75\unitlength];
\draw (0,0) circle [x radius=0.95975\unitlength, y radius=0.7932\unitlength] ;
\draw (0,0) circle [x radius=0.96525\unitlength, y radius=0.7977\unitlength];
\end{tikzpicture}}\hspace{1em}
\scalebox{.95}{

\begin{tikzpicture} 
\setlength{\unitlength}{1.0cm}%
\newlength{\neumannsize}
\setlength{\neumannsize}{0.85\unitlength}
\draw (0,0) circle [radius=3.0\unitlength];
\draw (0,0) circle [radius=1.0\unitlength];
\draw (0,0) circle [x radius=2.0\unitlength, y radius=1.4\unitlength] ;
\draw (0,0) circle [x radius=2.5\unitlength, y radius=2.2\unitlength];
\draw[->] (0.1\unitlength,0) -- (1.9\unitlength, 0) node[pos=0.7,above]{$a_1$};
\draw[->] (-0.1\unitlength,0) -- (-2.4\unitlength, 0) node[pos=0.5,above]{$a_2$};
\draw[->] (0,0.1\unitlength) -- (0,1.3\unitlength) node[pos=0.90,right]{$b_1$};
\draw[->] (0,-0.1\unitlength) -- (0,-2.1\unitlength) node[pos=0.54,right]{$b_2$};
\draw[->] (.07\unitlength,0.07\unitlength) -- (2.05\unitlength,2.05\unitlength) node[pos=0.55,above]{$r_{\rm o}~$};
\draw[->] (-0.07\unitlength, .07\unitlength) -- (-0.64\unitlength,0.64\unitlength) node[pos=0.4,above]{$r_{\rm i}$};
\draw[->, red] (0.9898\neumannsize, 0.1423\neumannsize) -- (0.9898\unitlength,0.1423\unitlength);
\draw[->, red] (0.9595\neumannsize, 0.2817\neumannsize) -- (0.9595\unitlength,0.2817\unitlength);
\draw[->, red] (0.9096\neumannsize, 0.4154\neumannsize) -- (0.9096\unitlength,0.4154\unitlength);
\draw[->, red] (0.8413\neumannsize, 0.5406\neumannsize) -- (0.8413\unitlength,0.5406\unitlength);
\draw[->, red] (0.7557\neumannsize, 0.6549\neumannsize) -- (0.7557\unitlength,0.6549\unitlength);
\draw[->, red] (0.6549\neumannsize, 0.7557\neumannsize) -- (0.6549\unitlength,0.7557\unitlength);
\draw[->, red] (0.5406\neumannsize, 0.8413\neumannsize) -- (0.5406\unitlength,0.8413\unitlength);
\draw[->, red] (0.4154\neumannsize, 0.9096\neumannsize) -- (0.4154\unitlength,0.9096\unitlength);
\draw[->, red] (0.2817\neumannsize, 0.9595\neumannsize) -- (0.2817\unitlength,0.9595\unitlength);
\draw[->, red] (0.1423\neumannsize, 0.9898\neumannsize) -- (0.1423\unitlength,0.9898\unitlength);

\draw[->, red] (-0.9898\neumannsize, 0.1423\neumannsize) -- (-0.9898\unitlength,0.1423\unitlength);
\draw[->, red] (-0.9595\neumannsize, 0.2817\neumannsize) -- (-0.9595\unitlength,0.2817\unitlength);
\draw[->, red] (-0.9096\neumannsize, 0.4154\neumannsize) -- (-0.9096\unitlength,0.4154\unitlength);
\draw[->, red] (-0.8413\neumannsize, 0.5406\neumannsize) -- (-0.8413\unitlength,0.5406\unitlength);
\draw[->, red] (-0.7557\neumannsize, 0.6549\neumannsize) -- (-0.7557\unitlength,0.6549\unitlength);
\draw[->, red] (-0.6549\neumannsize, 0.7557\neumannsize) -- (-0.6549\unitlength,0.7557\unitlength);
\draw[->, red] (-0.5406\neumannsize, 0.8413\neumannsize) -- (-0.5406\unitlength,0.8413\unitlength);
\draw[->, red] (-0.4154\neumannsize, 0.9096\neumannsize) -- (-0.4154\unitlength,0.9096\unitlength);
\draw[->, red] (-0.2817\neumannsize, 0.9595\neumannsize) -- (-0.2817\unitlength,0.9595\unitlength);
\draw[->, red] (-0.1423\neumannsize, 0.9898\neumannsize) -- (-0.1423\unitlength,0.9898\unitlength);

\draw[->, red] (-0.9898\neumannsize, -0.1423\neumannsize) -- (-0.9898\unitlength,-0.1423\unitlength);
\draw[->, red] (-0.9595\neumannsize, -0.2817\neumannsize) -- (-0.9595\unitlength,-0.2817\unitlength);
\draw[->, red] (-0.9096\neumannsize, -0.4154\neumannsize) -- (-0.9096\unitlength,-0.4154\unitlength);
\draw[->, red] (-0.8413\neumannsize, -0.5406\neumannsize) -- (-0.8413\unitlength,-0.5406\unitlength);
\draw[->, red] (-0.7557\neumannsize, -0.6549\neumannsize) -- (-0.7557\unitlength,-0.6549\unitlength);
\draw[->, red] (-0.6549\neumannsize, -0.7557\neumannsize) -- (-0.6549\unitlength,-0.7557\unitlength);
\draw[->, red] (-0.5406\neumannsize, -0.8413\neumannsize) -- (-0.5406\unitlength,-0.8413\unitlength);
\draw[->, red] (-0.4154\neumannsize, -0.9096\neumannsize) -- (-0.4154\unitlength,-0.9096\unitlength);
\draw[->, red] (-0.2817\neumannsize, -0.9595\neumannsize) -- (-0.2817\unitlength,-0.9595\unitlength);
\draw[->, red] (-0.1423\neumannsize, -0.9898\neumannsize) -- (-0.1423\unitlength,-0.9898\unitlength);
\draw[->, red] (-0.1423\neumannsize, -0.9898\neumannsize) -- (-0.1423\unitlength,-0.9898\unitlength);

\draw[->, red] (0.9898\neumannsize, -0.1423\neumannsize) -- (0.9898\unitlength,-0.1423\unitlength);
\draw[->, red] (0.9595\neumannsize, -0.2817\neumannsize) -- (0.9595\unitlength,-0.2817\unitlength);
\draw[->, red] (0.9096\neumannsize, -0.4154\neumannsize) -- (0.9096\unitlength,-0.4154\unitlength);
\draw[->, red] (0.8413\neumannsize, -0.5406\neumannsize) -- (0.8413\unitlength,-0.5406\unitlength);
\draw[->, red] (0.7557\neumannsize, -0.6549\neumannsize) -- (0.7557\unitlength,-0.6549\unitlength);
\draw[->, red] (0.6549\neumannsize, -0.7557\neumannsize) -- (0.6549\unitlength,-0.7557\unitlength);
\draw[->, red] (0.5406\neumannsize, -0.8413\neumannsize) -- (0.5406\unitlength,-0.8413\unitlength);
\draw[->, red] (0.4154\neumannsize, -0.9096\neumannsize) -- (0.4154\unitlength,-0.9096\unitlength);
\draw[->, red] (0.2817\neumannsize, -0.9595\neumannsize) -- (0.2817\unitlength,-0.9595\unitlength);
\draw[->, red] (0.1423\neumannsize, -0.9898\neumannsize) -- (0.1423\unitlength,-0.9898\unitlength);

\draw[->, red] (0, \neumannsize) -- (0,\unitlength);
\draw[->, red] (0, -\neumannsize) -- (0,-\unitlength);
\draw[->, red] (\neumannsize,0) -- (\unitlength,0);
\draw[->, red] (-\neumannsize,0) -- (-\unitlength,0);

\draw (.41\unitlength,-.22\unitlength) node{\color{red}\scriptsize $\widehat b_0 = 1$};

\draw (-1.3\unitlength,0.6\unitlength) node{$E_1, \nu_1$};
\draw (-1.3\unitlength,1.4\unitlength) node{$E_2, \nu_2$};
\draw (-1.3\unitlength,2.2\unitlength) node{$E_1, \nu_1$};

\end{tikzpicture}} 
\caption{Geometry and problem setting. Left: Exact geometry. Right: Schematic plot with applied boundary traction. }
\label{fig:geometry_bimaterial}
\end{center}
\end{figure}
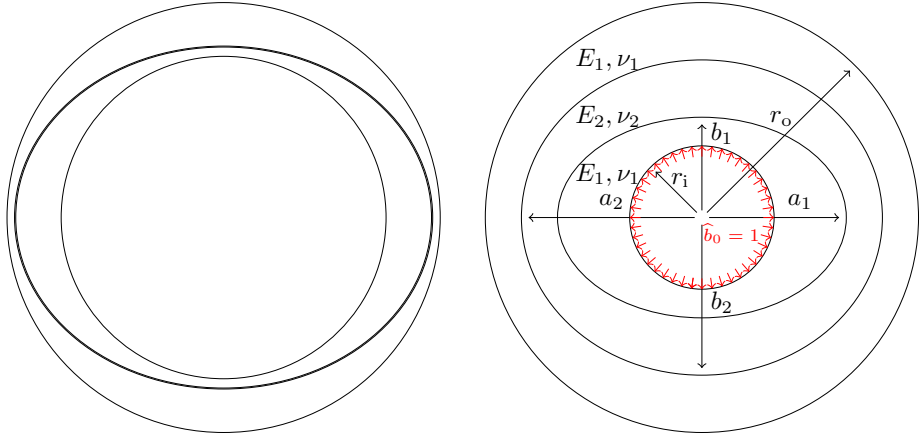

In Figure~\ref{fig:convergence_annulus}, convergence of the energy $E = \int_\Omega \stress(\mathbf u):\strain(\mathbf u) \dx$ is presented. Lacking the exact solution, we use a reference value on a refined mesh $E_{\rm ref}\approx 3.59\,\mathrm{e}{-3}$.
Again, we clearly see the suboptimality of the naive biorthogonal basis functions from \cite{seitz:16}, which exhibit a convergence of the order $h^{3/2}$.
In the second order case, the optimal biorthogonal basis shows the same approximation quality as the standard Lagrange multiplier, independent of the choice of the slave side. In the third order case, a suboptimal pre-asymptotic can again be observed for the case of a coarse slave mesh. Still, the error in the energy is smaller than for the naive biorthogonal basis.  When the fine side is chosen as the  slave space, again no difference in the approximation quality is seen between the optimal dual basis and the standard Lagrange multiplier.

\begin{figure}[h!]
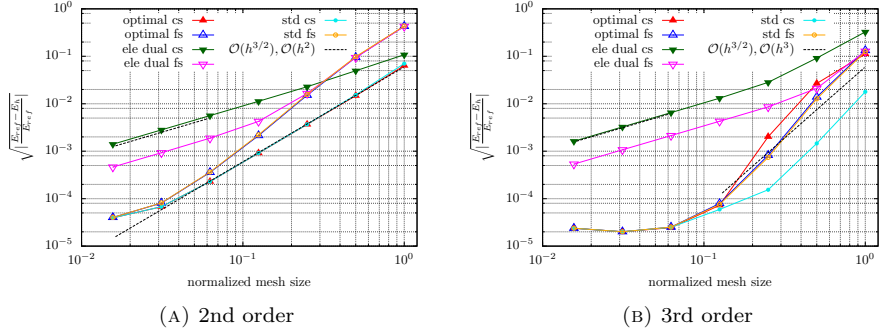

\subfloat[2nd order]
{
\scalebox{.5}{
\input{figures/biMatAn/convergence_energy_2nd.tex}
}
}
\subfloat[3rd order]
{
\scalebox{.5}{
\input{figures/biMatAn/convergence_energy_3rd.tex}
}
}
\caption{Convergence of the energy error for $p=2$ and $p=3$.
In the fine   slave ('fs') case, the thin inclusion layer with a finer (interface-) mesh is chosen as slave  side (on both interfaces).
In the coarse slave ('cs') case, the thin inclusion layer with a finer (interface-) mesh is chosen as master side (on both interfaces).}
\label{fig:convergence_annulus}
\end{figure}

These results are of particular importance when comparing to~\cite{flemisch:04,flemisch:07}, where strong oscillations could be observed for curved interfaces for the finite element case. The reason why this is not observed here for isogeometric methods  might be the exact representation of the curved interface. 

\subsection{Pressurized hollow sphere}
\begin{figure}[htb!]
\begin{center}
\begin{picture}(160,180)
\put(0,0){\includegraphics[width=0.55\textwidth]{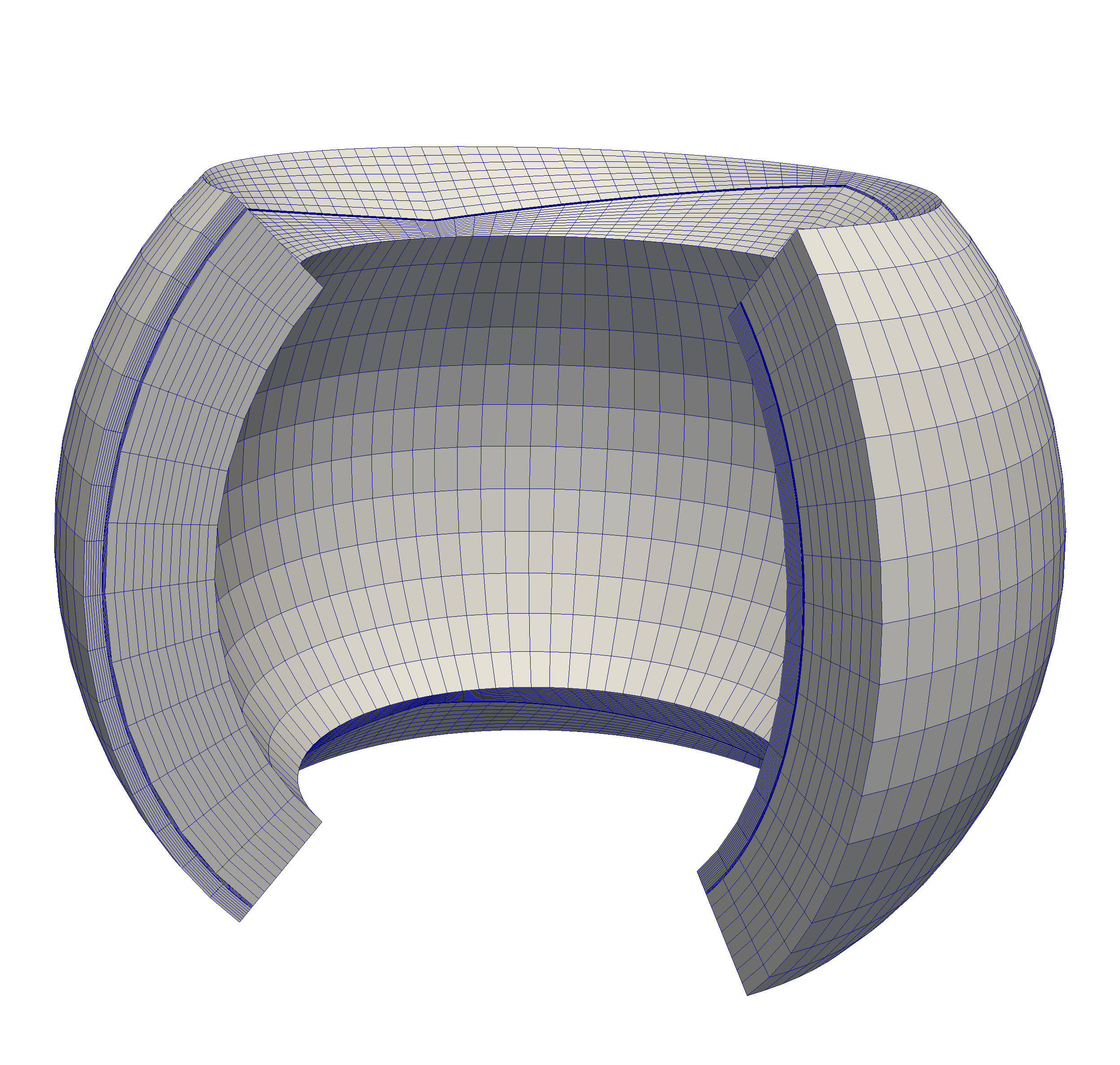}}
\put(36.67,82.19){\circle*{2}}
\put(36.67,77){\text{A}}
\put(132.92,71.6){\circle*{2}}
\put(123.14,71.6){\text{B}}
\end{picture}
\caption{Bimaterial geometry with the front  quarter removed for visualization.}
\label{fig:geo_3d}
\end{center}
\end{figure}
The final example extends the previous setup to the three-dimensional case. We consider a   pressurized hollow sphere with two $45^\circ$ holes as shown in Figure~\ref{fig:geo_3d}. Again, there is a thin inclusion of a stiff material with an elliptic cross-section. More precisely, the equatorial plane resembles the two-dimensional geometry from Section \ref{sec:numerics:annulus}.   We choose the same material parameters as before, but consider a non-linear Neo-Hooke material:  
\[
 \sef(\rcg) = c\, (\mathrm{tr}\rcg -3) + \frac{c}{\beta}\big((\det\rcg)^{-\beta}-1\big),
\]
with $c={E}/(1+4\nu)$ and $\beta={\nu}/(1-2\nu)$.

The final deformation for quadratic NURBS on $68\,624$ elements for the whole domain, with $104\,016$ control points is shown in Figure~\ref{fig:3d_result_stress}, which includes the circumferential Cauchy stress. As expected, the thin stiff inclusion carries most of the pressure. 
The biorthogonal basis guarantees an accurate and smooth transmission of the forces, and no oscillations across the interface can be seen at all. 

\begin{figure}[htb!]
\begin{minipage}{.48\textwidth}
\scalebox{0.53}{
\input{figures/biMat_3D/displacement_plot.tex}
}
\caption{Radial displacement  in relation to the internal pressure. }
\label{fig:3d_load_displacement}
\end{minipage}\hfill
\begin{minipage}{.48\textwidth}
\scalebox{0.53}{
\input{figures/biMat_3D/displacement_abs_error.tex}
}
\caption{Estimated error of the displacements by comparison of $\bar{h}=1$ to $\bar{h}=0.25$.}
\label{fig:3d_absolut_difference}
\end{minipage}
\end{figure}

We observe the radial displacements at two points $A$ and $B$ as shown in Figure~\ref{fig:geo_3d} during the increase of the internal pressure, see Figure~\ref{fig:3d_load_displacement}.
The discretization error is estimated qualitatively by comparing to the values obtained on a coarser mesh, see Figure~\ref{fig:3d_absolut_difference}. We see a good agreement, since the computed displacements differ by a value of less then $0.02$. For pressure values lower than $150$, the difference is even less then 2e-4.

 \begin{figure}
 \centering
 \includegraphics[width=\textwidth]{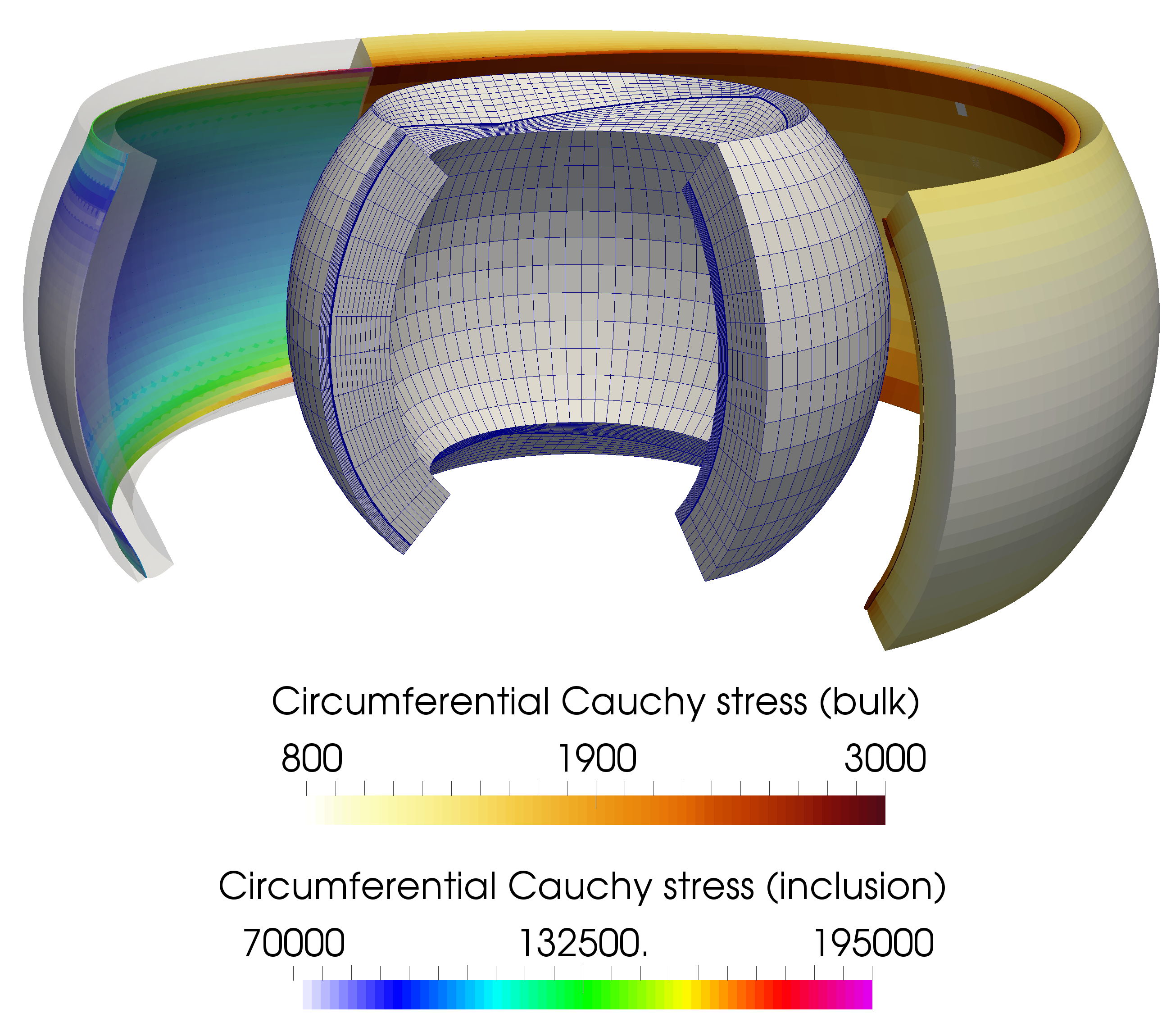}
\caption{Deformed geometry with the circumferential Cauchy stress indicated.}
\label{fig:3d_result_stress}
\end{figure}

\section{Conclusions}
\label{sec:conclusion}
We have investigated isogeometric methods with a newly constructed biorthogonal basis  that yields optimal convergence rates. Thanks to the local support of the dual basis, the resulting equation system is sparse and elliptic. The new biorthogonal basis is proposed with a univariate construction, that is then adapted for two-dimensional interfaces by a tensor product.  To preserve biorthogonality in the tensor product, we have to consider equivalent, weighted integrals. A crosspoint modification is inherently included in the one-dimensional construction.

The numerical results include finite deformations in 3D and confirm the optimal convergence. They also show that, whenever possible, the slave side should be chosen as the finer mesh, since a suboptimal pre-asymptotic is observed for coarse slave spaces. Finally, a three-dimensional bimaterial example with finite deformations qualitatively confirms the suitability and efficiency of the method for large-scale engineering applications.

\bibliography{lit.bib}
\end{document}